\documentclass[UTF-8,final]{siamltex}
\usepackage{amsmath}
\usepackage{epsfig}
\usepackage{graphicx}
\usepackage{amssymb}
\usepackage{CJK}
\usepackage{color}
\usepackage{subfigure} 
\usepackage{float}
\usepackage{hyperref}
\usepackage[noadjust]{cite}

\numberwithin{equation}{section}
\newtheorem{algorithm}{Algorithm}[section]
\newtheorem{remark}{Remark}[section]

\newtheorem{example}{Test}[section]

\renewcommand\abstractname{Abstract}

\renewcommand\figurename{Figure}
\renewcommand\tablename{Table}

\renewcommand\refname{References}

\allowdisplaybreaks[4]

\normalsize

\def\Ome{{\Omega}}

\def\nab{{\nabla}}

\def\p{{\partial}}

\def\cT{{\mathcal T}}

\def\div{{\mbox{\rm div\,}}}

\def\p{{\partial}}

\def\nab{\nabla}
\def\Ome{\Omega}

\def\bC{\mathbf{C}}
\def\bbf{\mathbf{f}}
\def\bu{\mathbf{u}}

\def\bv{\mathbf{v}}
\def\bw{\mathbf{w}}

\def\bg{\mathbf{g}}
\def\bn{\mathbf{n}}
\def\bH{\mathbf{H}}
\def\bV{\mathbf{V}}

\def\bP{\mathbf{P}}

\def\bV{\mathbf{V}}

\def\R{\mathbb{R}}

\begin{document}
	
	\title{ Optimal $L^2$-norm error estimate  of multiphysics finite element method for poroelasticity model and simulating brain edema\footnote{Last update: \today}}
	
	\author{
		Zhihao Ge\thanks{School of Mathematics and Statistics, Henan University, Kaifeng 475004, PR China ({\tt Email:zhihaoge@henu.edu.cn}).
			The work of this author was supported by Natural Science Foundation of Henan Province(No. 242300421047) and the National Natural Science Foundation of China(No. 12371393).}
		\and
		Yanan He\thanks{School of Mathematics and Statistics, Henan University, Kaifeng 475004, PR China ({\tt Email:hyn639@163.com}).}
		\and
		Yajie Yang\thanks{School of Mathematics and Statistics, Henan University, Kaifeng 475004, PR China ({\tt Email:819387116@qq.com}).}
	}
	
	\maketitle
	
	
	\setcounter{page}{1}
	
	
	
	\begin{abstract}
			In this paper, we derive an optimal  $L^2$-norm error estimate of the  multiphysics finite element method for the poroelasticity model  by introducing an auxiliary problem. We show some numerical tests to verify the theoretical result and apply the multiphysics finite element method to simulate the brain edema which caused by abnormal accumulation of cerebrospinal fluid in injured areas. And we investigate the effects of the key physical parameters on brain edema and observed that the permeability $K$ has the biggest influence on intracranial pressure and tissue deformation, Young's modulus $E$ and Poisson ratio $\nu$ have little effect on the maximum value of intracranial pressure, but have great effect on the tissue deformation and the developing speed of brain edema.
	\end{abstract}

	\begin{keywords}
	 Poroelasticity model, Multiphysics finite element method, $L^2$-norm error estimate, Brain edema.
	\end{keywords}
	
	\begin{AMS}
		65M12, 
		65M15, 
		65N30. 
	\end{AMS}
	
	\pagestyle{myheadings}
	\thispagestyle{plain}
	\markboth{ZHIHAO GE, YANAN HE, YAJIE YANG}{ OPTIMAL $L^2$-NORM ERROR ESTIMATE OF MFEM FOR POROELASTICITY}
	

\section{Introduction}\label{sec-1}

The theory of poroelasticity investigates the coupling process between the deformation of porous material and the internal fluid flow over time.
Poroelasticity model has a wide range of applications, such as logging technologies and borehole instability in reservoir engineering\cite{B1,B2,B3}, tunnel soil instability in geotechnics \cite{B4,B5,B6}, and $\rm CO_{2}$ sequestration in environmental engineering \cite{B7,B8}. 
In recent years, poroelasticity has been widely progressively applied to bioengineering, such as simulating biological soft tissues, including arterial walls, skin, cardiac muscle, articular cartilage and brain edema \cite{B9,B10,B11,B12,B13}.
In this paper, we consider the following 
quasi-static poroelasticity model \cite{B14,B15}:
\begin{align} \label{1.1}
	-\operatorname{div} \sigma(\bu) + \alpha \nab p = \bbf
	&\qquad \mbox{in } \Ome_T:=\Ome\times (0,T)\subset \mathbf{\R}^d\times (0,T),\\
	(c_0p+\alpha \operatorname{div} \bu)_t + \operatorname{div} \bv_f =\phi &\qquad \mbox{in } \Ome_T,
	\label{1.2}
\end{align}
where
\begin{align}\label{1.3}
	\sigma(\mathbf{u})=\mu\varepsilon(\mathbf{u})+\lambda \mathrm{tr}(\varepsilon(\mathbf{u}))\mathbf{I},\quad\varepsilon(\mathbf{u})=\frac12(\nabla\mathbf{u}+\nabla^T\mathbf{u}), \quad
	\mathbf{v}_{f}=-\frac{K}{\mu_{f}}\left(\nabla p-\rho_{f}\mathbf{g}\right).
\end{align}

To close the system  (\ref{1.1})-(\ref{1.2}), we set the following boundary and initial conditions:
\begin{align} 
\sigma(\bu)\bn-\alpha p \bn = \bbf_1
	&\qquad \mbox{on } \p\Ome_T:=\p\Ome\times (0,T),\label{1.4}\\
	\bv_f\cdot\bn= -\frac{K}{\mu_f} \bigl(\nab p -\rho_f \bg \bigr)\cdot \bn
	=\phi_1 &\qquad \mbox{on } \p\Ome_T, \label{1.5} \\
	\bu=\bu_0,\qquad p=p_0 &\qquad \mbox{in } \Ome\times\{t=0\}. \label{1.6}
\end{align}

We remark that Eq. (\ref{1.1}) is the momentum balance equations for the displacement of the medium and Eq. (\ref{1.2}) is the mass balance equation for the pressure distribution.
Here $\Omega\subset \mathbb{R} ^d\left ( d= 2, 3\right ) $ denotes a bounded domain with the boundary $\partial\Omega$. $\mathbf{u}$ denotes the displacement vector of solid and $p$ denotes the pressure of fluid. $\mathbf{f}$ is body force and $\phi$ is source term. $\sigma(\mathbf{u})$ is called the (effective) stress tensor and $\varepsilon(\mathbf{u})$ is known as the deformed Green strain tensor. $\mathbf{I}$ denotes the $d\times d$ identity matrix and $\mathbf{v}_f$ is called Darcy's law. 
Assume that the permeability tensor $K=K(x)$ is symmetric and uniformly positive definite, that is,  there exists positive constants $K_1$ and $K_2$ such that $K_1|\zeta|^2\leq K(x)\zeta\cdot\zeta\leq K_2|\zeta|^2$ for a.e. $x\in\Omega$ and $\zeta\in\mathbb{R}^d$. 
The positive Lam$\acute{e}$ constants $\lambda$ and $\mu$ are the dilation and shear modulus of elasticity, respectively. The coefficient $\alpha>0$ is the Biot-Willis constant, used to account for the pressure-deformation coupling and to measure the fluid volume forced out of the solid skeleton by dilation. The constrained specific storage coefficient $c_0\geq0$ is the combined porosity of the medium and compressibility of the fluid and solid.
$\mu_f$ denotes the fluid viscosity and  $\mathbf{g}$ denotes the acceleration of gravity. We assume $\rho_{f} \not\equiv 0$. The source terms of $\mathbf{f}_1$ and $\phi_1$ are given functions. In addition, we note that in some engineering literature the Lam$\acute{e}$ constant $\mu$ is also called the shear modulus(denoted by $G$) and $\lambda$,  $\mu$ are computed from the Young's modulus $E$ and the Poisson  ratio $\nu$ by the following formulas:
\begin{align}\label{3.1}
	\lambda=\frac{E\nu}{(1+\nu)(1-2\nu)},\qquad\mu=G=\frac{E}{2(1+\nu)}.
\end{align}

As for the problem (\ref{1.1})-(\ref{1.2}), there are many research results of PDE analysis and numerical analysis. For example, Phillips and Wheeler propose and analyze linear poroelasticity model for the continuous-in-time and discrete-in-time cases in \cite{B14,B15,B41}, later, they point out that there exist locking phenomenon in computation if $T$ is enough small in \cite{Phillips2009}. Yi develops a nonconforming mixed finite element method to solve Biot consolidation model in \cite{B42}. 
Feng, Ge and Li propose a multiphysics finite element method for a quasi-static poroelasticity model by a  multiphysics approach in \cite{B27}. 
Sun and Rui propose a coupling of the weak Galerkin method for solid displacement and the mixed finite element method of fluid phase in \cite{B43}.  
As for the brain edema,  which usually includes pathological elevated intracranial pressure, it can occur in specific parts or the whole brain, the high intracranial pressure will prevent blood from flowing to the brain, depriving it of the oxygen it needs to function properly, and the brain swelling also prevents other fluids from leaving the brain, making the swelling more worse, one can refer to \cite{B13,B32,B34}. In this paper, we uses the poroelasticity model to simulate brain edema  by the multiphysics finite element method(cf. \cite{B27}) and derive an optimal  $L^2$-norm error estimate of the  multiphysics finite element method for the poroelasticity model by introducing an auxiliary problem. The main contributions of the paper are listed  as follows:
\begin{enumerate}
	\renewcommand{\labelenumi}{(\theenumi)}
	\item 
	 It is the first time to derive an optimal  $L^2$-norm error estimate of the  multiphysics finite element method for the poroelasticity model by introducing an auxiliary problem.
	 
	\item  We apply the multiphysics finite element method to simulate the brain edema and carefully investigate the effects of the key physical parameters on brain edema. The numerical results show that the permeability $K$ has the biggest influence on intracranial pressure and tissue deformation, Young's modulus $E$ and Poisson ratio $\nu$ have little effect on the maximum value of intracranial pressure, but have great effect on the tissue deformation and the developing speed of brain edema.
\end{enumerate}

The remainder of this paper is organized as follows. In Section \ref{sec-3.2}, we reformulate the original model based on new variables and introduce the existence and uniqueness of weak solution, and derive the optimal convergence order of the fully discrete multiphysics finite element method  in $L^2$ and $H^1$ error estimate for $\mathbf{P}_2-P_1-P_1$ element pairs. In Section \ref{sec-3.4}, we show some numerical tests to verify the theoretical results and apply our method to simulate brain edema and investigate the effects of the key parameters  for brain edema. Finally, we draw conclusions to summarize the main results of the paper.

\section{ Optimal error estimate of multiphysics finite element method}\label{sec-3.2}
In this paper, we use the standard function space notation, which is defined in \cite{temam,B28}.
We denote $( \cdot , \cdot ) $ and $\langle \cdot , \cdot \rangle $  by the inner products of $L^2( \Omega) $ and $L^2( \partial\Omega)$ respectively.

Introduce new variables
\begin{align}\label{1113-1}
	q:= \operatorname{div}\mathbf{u},\quad \eta:=c_{0}p+\alpha q,\quad \xi:=\alpha p -\lambda q.
\end{align}
It is easy to check that
\begin{equation}\label{3.4}
	p=\kappa_{1} \xi + \kappa_{2} \eta, \qquad q=\kappa_{1} \eta-\kappa_{3} \xi,
\end{equation}
where
\begin{equation}\label{3.5}
	\kappa_{1}:= \frac{\alpha}{\alpha^{2}+\lambda c_{0}},
	\quad \kappa_{2}:=\frac{\lambda}{\alpha^{2}+\lambda c_{0}}, \quad 
	\kappa_{3}:=\frac{c_{0}}{\alpha^{2}+\lambda c_{0}}.
\end{equation}
Using (\ref{1113-1}), the problem (\ref{1.1})-(\ref{1.2}) can be rewritten as:
\begin{align}
	-\mu\operatorname{div}\varepsilon(\mathbf{u})+\nabla\xi=\mathbf{f} &\quad \mathrm{in~} \Omega_T, \label{3.6}\\
	\kappa_3\xi+\operatorname{div}\mathbf{u}=\kappa_1\eta &\quad \mathrm{in~}\Omega_T, \label{3.7}\\
	\eta_t- \frac{1}{\mu_f}\operatorname{div}[K(\nabla(\kappa_1\xi+\kappa_2\eta)-\rho_f \mathbf{g})]=\phi &\quad \mathrm{in~} \Omega_T. \label{3.8}
\end{align}
The boundary and initial conditions (\ref{1.4})-(\ref{1.6}) can be rewritten as:
\begin{align}
	\sigma(\mathbf{u})\mathbf{n}-\alpha (\kappa_1\xi+\kappa_2\eta)\mathbf{n}=\mathbf{f}_1& \quad \mathrm{on~} \partial\Omega_T,\label{3.9}\\
	-\frac{K}{\mu_f}(\nabla(\kappa_1\xi+\kappa_2\eta)-\rho_f \mathbf{g})\mathbf{n}=\phi_1& \quad \mathrm{on~} \partial\Omega_T,\label{3.10}\\
	\mathbf{u}=\mathbf{u_0},~~p=p_0& \quad \mathrm{in~}  \Omega\times\{t=0\}.\label{3.11}
\end{align}

In order to ensure the existence and uniqueness of solution to the problem (\ref{3.6})-(\ref{3.8}) with pure Neumann boundary conditions, we introduce the  infinitesimal rigid motions space 
\begin{align}
	\mathbf{RM}:=\{\mathbf{r}\mid\mathbf{r}=\mathbf{a}+\mathbf{b}\times \mathbf{x},~\mathbf{a},\mathbf{b},\mathbf{x}\in\mathbb{R}^d\}.
\end{align}
From \cite{temam,B22,B23} we known that $\mathbf{RM}$ is the kernel of the strain operator $\varepsilon$, that is, $\mathbf{r}\in \mathbf{RM}$ if and only if $\varepsilon(\mathbf{r})=0$. Hence, we have
\begin{align}
	\varepsilon(\mathbf{r})=0,\quad\div\mathbf{r}=0\qquad\forall\mathbf{r}\in\mathbf{RM}.
\end{align}
Moreover, we define $\mathbf{H}_\bot^1( \Omega)$, which is the subspace of  $\mathbf{H}^1(\Omega)$ and  is orthogonal to $\mathbf{RM}$, that is,
\begin{align}
	\mathbf{H}_\perp^1(\Omega)&:=\{\mathbf{v}\in\mathbf{H}^1(\Omega);\:(\mathbf{v},\mathbf{r})=0\:~~\forall\mathbf{r}\in\mathbf{RM}\}.
\end{align}
Next, we introduce an important result from Ref. \cite{B24} as follows: 
\begin{lemma}
	Let $U\in \mathbb{R}^{n}$ be a convex domain with diameter $\gamma$. Then
	\begin{align}\label{3-25-1}
		\|\phi\|_{L^2(U)}\leq \frac{\gamma}{\pi}\|
		\nabla\phi\|_{L^2(U)}
	\end{align}
	for all $\phi(x)\in H^1(U)$ satisfying $\int_{U}\phi(x)\mathrm{~dx}=0$.
\end{lemma}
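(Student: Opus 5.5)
The plan follows the classical Payne--Weinberger argument: reduce the Poincar\'e--Wirtinger inequality on a convex domain to a one-dimensional weighted eigenvalue problem. By density of smooth functions in $H^1(U)$, and since both sides of \eqref{3-25-1} are continuous in the $H^1$ norm (the mean-zero constraint is also preserved under this approximation after subtracting the mean), it suffices to prove \eqref{3-25-1} for $\phi\in C^\infty(\overline U)$ with $\int_U\phi\,dx=0$.

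\emph{Step 1 (bisection).} Given such a $\phi$ and a small $\epsilon>0$, I repeatedly cut $U$ by hyperplanes into finitely many convex pieces $U_i$. Each cut is chosen, by an intermediate-value argument over the hyperplane's orientation, so that $\int_{U_i}\phi=0$ still holds on every piece. I continue until each $U_i$ is ``thin'': contained in an $\epsilon$-neighbourhood of a segment, with diameter at most $\gamma$. If I can show
\begin{align*}
\int_{U_i}\phi^2\le \Bigl(\frac{\gamma}{\pi}\Bigr)^2\int_{U_i}|\nabla\phi|^2+o(1)
\end{align*}
on each piece, then summing over the pieces and letting $\epsilon\to0$ gives the result.

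\emph{Step 2 (reduction to one dimension).} On a thin convex piece, I parametrize along the long segment by $s\in(0,L)$, with $L\le\gamma$, and let $h(s)$ be the $(n-1)$-dimensional cross-sectional measure. By Brunn--Minkowski, $h^{1/(n-1)}$ is concave on $(0,L)$. Up to $O(\epsilon)$ errors controlled by the smoothness of $\phi$, the problem becomes the following one-dimensional inequality: for $g$ with $\int_0^L g\,h\,ds=0$,
\begin{align*}
\int_0^L g^2h\,ds\le \frac{L^2}{\pi^2}\int_0^L (g')^2h\,ds .
\end{align*}

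\emph{Step 3 (the one-dimensional inequality), which I expect to be the main obstacle.} The claim is that the first nonzero Neumann eigenvalue of $-(hg')'=\mu hg$ on $(0,L)$ is at least $\pi^2/L^2$ whenever $h$ is log-concave. Concavity of $h^{1/(n-1)}$ implies $h$ is log-concave, so this covers our case. I would take the first eigenfunction $g$ and set $w=\arctan\bigl(\sqrt{\mu}\,g/g'\bigr)$, a Pr\"ufer-type angle. Using $(\log h)''\le0$, I would derive the differential inequality $w'\ge\sqrt{\mu}$ and the boundary values $w(0)=0$, $w(L)=\pi$. Integrating then gives $\sqrt{\mu}\,L\le\pi$, hence $\mu\ge\pi^2/L^2\ge\pi^2/\gamma^2$. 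The delicate points are the sign bookkeeping for $h'/h$ in this comparison, and justifying that the mean-zero condition and the Neumann boundary conditions survive the thin-domain limit.

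Since this lemma is the classical result quoted from \cite{B24}, in the paper I would simply cite it. The outline above is the argument I would reproduce if a self-contained proof were required.
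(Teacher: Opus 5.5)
The paper gives no proof of this lemma; it quotes it from \cite{B24}, which is the Payne--Weinberger inequality. Your outline (zero-mean bisection into thin convex pieces, Brunn--Minkowski, then a one-dimensional weighted eigenvalue bound) is the route that reference follows, and citing it, as you say you would, is exactly what the paper does. As a self-contained argument, however, your Step~3 fails as stated.

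First, the deduction is reversed. If $w'\ge\sqrt{\mu}$ and $w$ increases by $\pi$ on $(0,L)$, then $\pi\ge\sqrt{\mu}\,L$, i.e.\ $\mu\le\pi^2/L^2$, which is an upper bound.

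Second, no pointwise comparison with $\sqrt{\mu}$, in either direction, follows from $(\log h)''\le 0$. With $\tan w=\sqrt{\mu}\,g/g'$ and $(hg')'+\mu hg=0$ one gets exactly
\[
w'=\sqrt{\mu}+\tfrac12(\log h)'\sin 2w .
\]
Normalize the first nontrivial eigenfunction to be increasing (it is strictly monotone). Then $w$ runs from $-\pi/2$ to $\pi/2$, not from $0$ to $\pi$, and $\sin 2w$ has the sign of $g$. Log-concavity says only that $(\log h)'$ is nonincreasing, not that it vanishes at the zero $s_0$ of $g$. For a thin planar wedge, $h(s)=s$, one has $(\log h)'=1/s>0$. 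So $w'<\sqrt{\mu}$ where $g<0$, and $w'>\sqrt{\mu}$ where $g>0$.

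The missing idea is to compare with the equation frozen at $s_0$. Let $c=(\log h)'(s_0)$, or any value between the one-sided derivatives if $h$ is not differentiable there.
\begin{itemize}
\item For $s>s_0$ you have $(\log h)'\le c$ and $\sin 2w>0$.
\item For $s<s_0$ you have $(\log h)'\ge c$ and $\sin 2w<0$.
\end{itemize}
Hence $w'\le F(w):=\sqrt{\mu}+\frac c2\sin 2w$ on all of $(0,L)$.

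If $\sqrt{\mu}\le |c|/2$, then $F$ has a zero in $(-\pi/2,\pi/2)$ while $F(-\pi/2)=\sqrt{\mu}>0$. Comparison with the autonomous equation $\bar w'=F(\bar w)$ then shows that $w$ can never reach $\pi/2$, a contradiction. Otherwise $F>0$, and integrating $w'/F(w)\le 1$ gives
\[
L\ge\int_{-\pi/2}^{\pi/2}\frac{d\vartheta}{\sqrt{\mu}+\frac c2\sin 2\vartheta}=\frac{\pi}{\sqrt{\mu-c^2/4}} .
\]
So $\mu\ge\pi^2/L^2+c^2/4\ge\pi^2/L^2$, with equality for $h=e^{cs}$.

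A less serious point: for $n\ge 3$, repeated zero-mean bisection does not by itself produce pieces lying in an $\epsilon$-neighbourhood of a segment. The cutting hyperplanes have to be organized, e.g.\ by rotating them about suitable $(n-2)$-dimensional subspaces. That step, together with the $O(\epsilon)$ bookkeeping in Step~2 (mean-zero and Neumann conditions surviving the thin limit), still needs an actual argument rather than an assertion.
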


Throughout the paper, we assume that $C$ may represent different positive constants in different places.
\begin{definition}
	Let $\mathbf{u}_0\in \mathbf{H}^1$, $\mathbf{f} \in \mathbf{L}^2(\Omega)$, $\mathbf{f}_1 \in \mathbf{L}^2(\partial\Omega)$, $p_0\in L^2(\Omega)$, $\phi\in L^2(\Omega)$, and  $\phi_1\in L^2(\partial\Omega)$. Assume that $(\mathbf{f},\mathbf{v})+\langle\mathbf{f}_1,\mathbf{v}\rangle=0\ \forall\mathbf{v}\in\mathbf{RM}$. Given $T>0$, a 3-tuple $(\mathbf{u}, \xi, \eta)$ with
	\begin{align*}
		\mathbf{u}\in L^{\infty}\left(0,T;\mathbf{H}_{\perp}^{1}(\Omega)\right), \ \xi\in L^{2}\left(0,T;H^{1}(\Omega)\right),\   
		\eta\in L^{\infty}\left(0,T;H^{1}(\Omega)\right)\cap H^{1}\left(0,T;H^{-1}(\Omega)\right) 
	\end{align*}
	is called a weak solution to \eqref{3.6}-\eqref{3.8} if there hold for almost every $t\in (0,T)$
	\begin{align}
		\mu(\varepsilon(\mathbf{u}),\varepsilon(\mathbf{v}))-(\xi,\mathop{\operatorname{div}}\mathbf{v}) =(\mathbf{f},\mathbf{v})+\langle\mathbf{f}_{1},\mathbf{v}\rangle \quad& \forall\mathbf{v}\in\mathbf{H}^{1}(\Omega), \label{2.15}\\
		\kappa_{3}(\xi,\varphi)+(\operatorname{div}\mathbf{u},\varphi) =\kappa_1(\eta,\varphi)\quad&\forall\varphi\in L^2(\Omega), \label{2.16}\\
		(\eta_{t},\psi)+\frac{1}{\mu_{f}}\big(K(\nabla(\kappa_{1}\xi+\kappa_{2}\eta) -\rho_f\mathbf{g}),\nabla\psi) 
		=(\phi,\psi)+\langle\phi_1,\psi\rangle\quad&\forall\psi\in H^1(\Omega), \label{2.17}\\
		\mathbf{u}(0)=\mathbf{u}_{0},\quad p(0) =p_{0},\quad
		q(0)=q_0:=\operatorname{div}\mathbf{u}_0,&\nonumber\\
		\eta(0)=\eta_0: =c_0p_0+\alpha q_0,\quad\xi(0)=\xi_0:=\alpha p_0-\lambda q_0. &\nonumber
	\end{align}
\end{definition}
\begin{theorem}
	Let $\mathbf{u}_0 \in\mathbf{H}^1(\Omega)$,
	$\mathbf{f}\in\mathbf{L}^2(\Omega)$, $\mathbf{f}_1\in\mathbf{L}^2(\partial\Omega)$, $p_0\in L^2(\Omega)$, $\phi\in L^2(\Omega)$, and $\phi_1\in L^2(\partial\Omega)$. Suppose $(\mathbf{f},\mathbf{v})+\langle\mathbf{f}_1,\mathbf{v}\rangle=0$ ~$\forall~\mathbf{v}\in\mathbf{RM}$. Then there exists a unique solution to the problem  \eqref{3.6}-\eqref{3.11}.
\end{theorem}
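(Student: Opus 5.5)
The plan is the standard route for this kind of multiphysics reformulation, as in \cite{B27}. First build Galerkin approximations, then derive uniform a priori energy estimates, then pass to the limit, and finally prove uniqueness with the same energy identity applied to the difference of two solutions.

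\textbf{Galerkin construction.} Choose finite-dimensional subspaces $\mathbf{V}_m\subset\mathbf{H}^1_\perp(\Omega)$, $W_m\subset L^2(\Omega)$ and $M_m\subset H^1(\Omega)$, whose unions are dense. Pose \eqref{2.15}--\eqref{2.17} on these spaces.
\begin{itemize}
\item The compatibility condition $(\mathbf{f},\mathbf{r})+\langle\mathbf{f}_1,\mathbf{r}\rangle=0$ for $\mathbf{r}\in\mathbf{RM}$ lets us test \eqref{2.15} only with $\mathbf{v}\in\mathbf{H}^1_\perp$.
\item For fixed $\eta$, the pair $(\mathbf{u},\xi)$ solves a generalized Stokes system. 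This system is uniquely solvable: Korn's inequality on $\mathbf{H}^1_\perp$ gives coercivity of $\mu(\varepsilon(\cdot),\varepsilon(\cdot))$, and the term $\kappa_3(\xi,\varphi)$ together with the inf-sup condition of the divergence (applied to mean-zero parts, with the mean of $\xi$ fixed by testing with constants) controls $\xi$.
\item Hence $(\mathbf{u}_m,\xi_m)$ is a linear function of $\eta_m$. Equation \eqref{2.17} then becomes a linear ODE system for the coefficients of $\eta_m$, which has a unique global solution.
\end{itemize}

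\textbf{Energy estimate.} Differentiate \eqref{2.16} in time and take $\varphi=\xi_m$, take $\mathbf{v}=\mathbf{u}_{m,t}$ in \eqref{2.15}, and take $\psi=\kappa_1\xi_m+\kappa_2\eta_m=p_m$ in \eqref{2.17}. Adding the three relations cancels the coupling terms $(\xi,\operatorname{div}\mathbf{u}_t)$ and $\kappa_1(\eta_t,\xi)$. This yields
\[
\frac12\frac{d}{dt}\Bigl(\mu\|\varepsilon(\mathbf{u}_m)\|^2+\kappa_3\|\xi_m\|^2+\kappa_2\|\eta_m\|^2\Bigr)+\frac{1}{\mu_f}\bigl(K\nabla p_m,\nabla p_m\bigr)=\text{data terms}.
\]
The data terms involve $(\mathbf{f},\mathbf{u}_{m,t})$. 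These are handled by integrating by parts in time, which requires $\mathbf{f}$ to be regular in $t$; this assumption is implicit in the statement. The remaining data terms $(\phi,p_m)$ and $\langle\phi_1,p_m\rangle$ and the gravity term are absorbed using Young's inequality, the trace inequality, and the Poincar\'e inequality \eqref{3-25-1} for the mean-zero part of $p_m$. The mean of $p_m$ is controlled by testing with constants. Gronwall's inequality then gives bounds, uniform in $m$:
\begin{itemize}
\item $\mathbf{u}_m$ in $L^\infty(\mathbf{H}^1_\perp)$;
\item $\eta_m$ in $L^\infty(L^2)$;
\item $p_m$ in $L^2(H^1)$;
\item $\xi_m$ in $L^2(L^2)$, and in $L^2(H^1)$ via the Stokes inf-sup estimate from \eqref{2.15} (here $\nabla\xi$ is controlled by $\mathbf{f}$ through \eqref{3.6} in the strong sense).
\end{itemize}
Differentiating the whole system once more in time and repeating the argument upgrades $\eta$ to $L^\infty(H^1)$. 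A duality argument on \eqref{2.17} gives $\eta_t\in L^2(H^{-1})$.

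\textbf{Passage to the limit and uniqueness.} Weak-$*$ compactness extracts subsequences. Since the equations are linear, one can pass to the limit directly, and the initial data are recovered by the standard argument. For uniqueness, the difference of two solutions satisfies the homogeneous problem with zero data, and the energy identity above forces it to vanish: it gives $\varepsilon(\mathbf{u})=0$, so $\mathbf{u}\in\mathbf{RM}\cap\mathbf{H}^1_\perp=\{0\}$, and then $\xi=\eta=0$.

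\textbf{Main obstacle.} The hardest step is the energy estimate when $c_0=0$. In that case $\kappa_3=0$, so there is no $L^2$ control of $\xi$ from the energy. It must instead come from the inf-sup condition applied to \eqref{2.15}, and this needs a time-differentiated version to handle $\mathbf{u}_t$. I would first try bounding $\|\xi\|$ by $\mu\|\varepsilon(\mathbf{u})\|+\|\mathbf{f}\|+\|\mathbf{f}_1\|$ through the inf-sup estimate, and feed that bound into the Gronwall step.
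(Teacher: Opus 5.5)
The paper gives no argument of its own here; it only cites \cite{B27}. Your skeleton (Galerkin approximation, the energy identity from $\mathbf{v}=\mathbf{u}_t$ in \eqref{2.15}, the time-differentiated \eqref{2.16} with $\varphi=\xi$, and $\psi=p$ in \eqref{2.17}, then compactness and uniqueness) is the standard one. The cancellation you describe is correct. The bound $\|\xi\|_{L^2}\le C(\mu\|\varepsilon(\mathbf{u})\|_{L^2}+\|\mathbf{f}\|_{L^2}+\|\mathbf{f}_1\|_{L^2(\partial\Omega)})$ from the inf-sup condition over $\mathbf{H}^1_\perp(\Omega)$ is exactly what closes the estimate when $c_0=0$. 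That inf-sup controls the full $L^2$ norm of $\xi$, mean included, so no separate treatment of the mean is needed.

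\textbf{The gap.} The step that fails is the regularity upgrade at the end of your energy paragraph.
The inf-sup estimate gives no control of $\nabla\xi$. Reading \eqref{3.6} strongly, $\nabla\xi=\mathbf{f}+\mu\operatorname{div}\varepsilon(\mathbf{u})$ lies in $L^2$ only if $\mathbf{u}\in\mathbf{H}^2$. That in turn needs $\operatorname{div}\mathbf{u}=\kappa_1\eta-\kappa_3\xi\in H^1$, so the argument is circular.
Differentiating the system in time produces bounds on $\mathbf{u}_t$, $\eta_t$, $\nabla p_t$. These are temporal, not spatial, regularity. Even they require $\eta_t(0)\in L^2$, i.e.\ a far smoother and compatible $p_0$ than $p_0\in L^2(\Omega)$.

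In fact no argument can reach the class in the paper's definition from these hypotheses:
(a) Since $\eta\in H^1(0,T;H^{-1})\subset C([0,T];H^{-1})$, a bound in $L^\infty(0,T;H^1(\Omega))$ makes $\eta$ weakly continuous with values in $H^1(\Omega)$. This forces $\eta_0=c_0p_0+\alpha\operatorname{div}\mathbf{u}_0\in H^1(\Omega)$, which is false for general $p_0\in L^2(\Omega)$, $\mathbf{u}_0\in\mathbf{H}^1(\Omega)$.
(b) Since $p\in L^2(0,T;H^1)$, the requirement $\xi\in L^2(0,T;H^1)$ is equivalent to $\operatorname{div}\mathbf{u}\in L^2(0,T;H^1)$. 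This is an $\mathbf{H}^2$-type displacement regularity that the energy method does not give. Traction data merely in $\mathbf{L}^2(\partial\Omega)$ do not support it either.

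\textbf{The fix.} What you can prove is existence and uniqueness in the class your estimates actually deliver:
$\mathbf{u}\in L^\infty(0,T;\mathbf{H}^1_\perp)$, $\xi,\eta\in L^\infty(0,T;L^2)$, $p=\kappa_1\xi+\kappa_2\eta\in L^2(0,T;H^1)$, $\eta_t\in L^2(0,T;H^1(\Omega)')$.
This is enough for \eqref{2.17}, which sees $\xi$ and $\eta$ only through $\nabla p$. Three further repairs are needed.

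(i) Both scalar Galerkin spaces must be $H^1$-conforming, with the $\xi$-space contained in the $\eta$-space. Otherwise $\nabla\xi_m$ is undefined and $\psi=p_m$ is not an admissible test function. The displacement/$\xi$ pair must also satisfy the inf-sup condition uniformly in $m$, e.g.\ the paper's $\mathbf{P}_2$--$P_1$ spaces on a refining mesh family. Without that, your $c_0=0$ bound on $\xi_m$ is not uniform.

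(ii) For uniqueness you cannot test a weak solution with $\mathbf{u}_t$, which is not available in this class. Instead, for the difference of two solutions, integrate \eqref{2.17} in time, set $P(t)=\int_0^t p\,ds$, and test with $\psi=p(t)$. Using \eqref{2.16} with $\varphi=\xi$ and \eqref{2.15} with $\mathbf{v}=\mathbf{u}$ (zero data), you get $(\eta,p)=\kappa_2\|\eta\|^2+\kappa_3\|\xi\|^2+\mu\|\varepsilon(\mathbf{u})\|^2$. Hence
$\int_0^t(\kappa_2\|\eta\|^2+\kappa_3\|\xi\|^2+\mu\|\varepsilon(\mathbf{u})\|^2)\,ds+\frac{1}{2\mu_f}(K\nabla P(t),\nabla P(t))=0$.
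This gives $\eta=0$ and $\mathbf{u}=0$, and then $\xi=0$ by the inf-sup condition.

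(iii) Passing to the limit recovers only $\eta(0)=\eta_0$. The values $\mathbf{u}(0)$ and $\xi(0)$ are then determined by $\eta_0$ through \eqref{2.15}--\eqref{2.16}. So $\mathbf{u}(0)=\mathbf{u}_0$ and $p(0)=p_0$ can hold only if $(\mathbf{u}_0,p_0)$ satisfy \eqref{2.15} at $t=0$.

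Finally, $\mathbf{f}$ and $\mathbf{f}_1$ are time-independent in the statement. So $(\mathbf{f},\mathbf{u}_t)+\langle\mathbf{f}_1,\mathbf{u}_t\rangle$ is an exact time derivative, and no extra time-regularity assumption on the data is needed.
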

\begin{proof}
	For the detailed proof of the existence and uniqueness of the problem \eqref{3.6}-\eqref{3.11}, one can refer to \cite{B27}.
\end{proof}

Let $\mathcal{T}_h$ be a quasi-uniform triangulation or rectangular partition of $\Omega$ with mesh size $h$, and $\bar{\Omega}=\bigcup_{K\in\mathcal{T}_h}\bar{K}$.
The time interval $(0, T)$ is divided into $N$ equal intervals, denoted by $[t_{n-1},t_n], n=1,2,\cdots, N$ and $\Delta t=\frac{T}{N}$, $\mathrm{then~}t_n=n\Delta t$. 

Next, we define the following spaces of piecewise polynomials
\begin{align*}
\mathbf{R}_h &=\bigl\{\bv_h\in \bC^0(\overline{\Ome});\,
\bv_h|_K\in \bP_2(K)~~\forall K\in \cT_h \bigr\}, \\
R_h &=\bigl\{\varphi_h\in C^0(\overline{\Ome});\, \varphi_h|_K\in P_1(K)
~~\forall K\in \cT_h \bigr\},
\end{align*}
where $\mathbf{P}_2(K)$ and $P_1(K)$ are quadratic and linear polynomial spaces on $K$, respectively. Also, we introduce the function space
\begin{align*}
	L_0^2(\Omega):=\{q\in L^2(\Omega); (q,1)=0\},
\end{align*}
and define the mixed finite element pair $(\mathbf{X}_h,M_h,W_h)$ for variable $(\mathbf{u},\xi,\eta)$ as follows
\begin{align}
	\mathbf{X}_h=\mathbf{R}_h\cap \mathbf{H}^1(\Omega), \quad 
	M_h = R_h\cap L^2(\Omega).
\end{align}

The finite element approximation space $W_h$ for $\eta$ can be chosen independently, any piecewise polynomial space is acceptable provided that
$M_h \subseteq W_h\subseteq L^2(\Omega)$. 
The most convenient choice is $W_h =M_h$, which
will be adopted in the remainder of this paper. 

Moreover, we define
\begin{align}
	\mathbf{V}_h=\{\mathbf{v}_h\in\mathbf{X}_h;\mathrm{~}(\mathbf{v}_h,\mathbf{r})=0~~\forall\mathbf{r}\in\mathbf{R}\mathbf{M}\},
\end{align}
it is easy to check that $\mathbf{X}_h=\mathbf{V}_h\bigoplus\mathbf{RM}$. 
Note that the spaces $\mathbf{V}_h \times M_h$ are usually referred to as the generalized Taylor-Hood elements and satisfy the discrete inf-sup condition(cf. \cite{B35,B26})
\begin{align}
	\|q_h\|_{L^2(\Omega)}\leq C\sup_{\mathbf{0}\neq\mathbf{v}_h\in \mathbf{V}_h}\frac{(q_h, \div \mathbf{v}_h)}{\|\mathbf{v}_h\|_{H^1(\Omega)}}\qquad\forall q_h\in M_{0h}:=M_h\cap L_0^2(\Omega),
\end{align}
for some constant $C>0$.

\begin{algorithm}{Multiphysics finite element method (MFEM)} \label{alg2.1}
\begin{itemize}
\item[(i)]
Compute $\bu^0_h\in \bV_h$, $\xi_h^0\in M_h$ and $\eta^0_h\in W_h$ by
\begin{alignat*}{3}
\bu^0_h &=\mathcal{R}_h\bu_0, \quad p^0_h =\mathcal{Q}_hp_0, \quad
 q^0_h =\mathcal{Q}_hq_0 \ (q_0 =\operatorname{div} \bu_0), \\
\eta^0_h &=c_0p^0_h+\alpha q^0_h, \quad
\xi_h^0 =\alpha p_h^0 -\lambda q_h^0,
\end{alignat*}
where $\mathcal{R}_h$ and
$\mathcal{Q}_h$ are defined by (\ref{3.19}) and (\ref{3.24}).

\item[(ii)] For $n=0,1,2, \cdots$,  do the following two steps.

{\em Step 1:} Solve for $(\bu^{n+1}_h,\xi^{n+1}_h,\eta^{n+1}_h)\in \bV_h\times M_h\times W_h$ such that
\begin{align}
	\mu(\varepsilon(\mathbf{u}_{h}^{n+1}), \varepsilon(\mathbf{v}_h) )
	-( \xi^{n+1}_h, \operatorname{div} \mathbf{v}_h )
	=({\mathbf{f}},{\mathbf{v}_{h}})
	+\langle{\mathbf{f}_{1}},{\mathbf{v}_h}\rangle,&\quad\forall{\mathbf{v}_{h}}\in{\mathbf{V}_{h}},\label{3.14}\\
	\kappa_{3}(\xi_{h}^{n+1},{\varphi_{h}})
	+(\operatorname{div} \mathbf{u}_{h}^{n+1},{\varphi_{h}})=
	k_{1}(\eta_{h}^{n+\theta},{\varphi_{h}}),&\quad\forall \varphi_{h}\in M_{h},\label{3.15}\\
	({d_{t}\eta_{h}^{n+1}},\psi_{h})+\frac{1}{\mu_{f}}({K(\nabla(\kappa_{1}\xi_{h}^{n+1} +\kappa_{2}\eta_{h}^{n+1})-{\rho_{f}}\mathbf{g})},{\nabla\psi_{h}})&\nonumber\\
	=(\phi,\psi_{h})+\langle{\phi_{1}},\psi_{h}\rangle,&\quad\forall \psi_{h}\in M_{h}.\label{3.16}
\end{align}

{\em Step 2:} Update $p^{n+1}_h$ and $q^{n+1}_h$ by
\begin{eqnarray}
p^{n+1}_h=\kappa_1\xi^{n+1}_h +\kappa_2\eta^{n+\theta}_h, \quad
q^{n+1}_h=\kappa_1\eta^{n+1}_h-\kappa_3\xi^{n+1}_h,\label{3.17}
\end{eqnarray}
where $\theta=0$ or $1$.
\end{itemize}
\end{algorithm}
\begin{remark}
	When $\theta =1$, Algorithm \ref{alg2.1} is coupled; when $\theta =0$, Algorithm \ref{alg2.1} is decoupled. 
	For the stability analysis of Algorithm \ref{alg2.1}, one can refer to \cite{B27}.
\end{remark}

Next, we analyze the optimal error estimates of Algorithm \ref{alg2.1}. To do that, 
for any $\varphi\in L^2(\Omega)$ we define its $L^2$-projection $\mathcal{Q}_h: L^2\rightarrow W_h$ as
\begin{align}\label{3.19}
	\bigl( \mathcal{Q}_h\varphi, \psi_h  \bigr)=\bigl( \varphi, \psi_h  \bigr) \qquad \forall\psi_h\in W_h.
\end{align}

For any $\varphi\in H^1(\Omega)$, we define the elliptic projection $\mathcal{S}_h\varphi$ by
\begin{alignat}{2}\label{3.21}
	\bigl(K\nabla\mathcal{S}_h\varphi, \nabla\varphi_h\bigr) &=\bigl(K\nabla\varphi, \nabla\varphi_h\bigr)
	&& \qquad \forall \varphi_h\in W_h,\\
	\bigl(\mathcal{S}_h\varphi, 1\bigr) &=\bigl(\varphi, 1\bigr).&&\label{3.22}
\end{alignat}

For any $\bv\in \bH^1_\perp(\Omega)$, we define the elliptic projection $\mathcal{R}_h\bv$ by
\begin{alignat}{2}\label{3.24}
	\bigl(\varepsilon(\mathcal{R}_h\bv), \varepsilon(\bw_h)\bigr)
	=\bigl(\varepsilon(\bv), \varepsilon(\bw_h)\bigr) \quad \forall\bw_h\in \bV_h.
\end{alignat}

From \cite{B28}, we know that for any $\varphi\in H^s(\Omega) (s\geq1)$ the projection operator $\mathcal{Q}_h$ satisfies
\begin{align}\label{3.20}
\|\mathcal{Q}_h\varphi-\varphi\|_{L^2(\Ome)}+h\| \nabla(\mathcal{Q}_h\varphi
-\varphi) \|_{L^2(\Ome)}\leq Ch^\ell\|\varphi\|_{H^\ell(\Ome)}, \quad \ell=\min\{2, s\};
\end{align}
for any $\varphi\in H^s(\Omega) (s>1)$ the projection operator $\mathcal{S}_h$ satisfies 
\begin{align}\label{3.23}
\|\mathcal{S}_h\varphi-\varphi\|_{L^2(\Ome)}+h\| \nabla(\mathcal{S}_h\varphi-\varphi) \|_{L^2(\Ome)}
\leq Ch^\ell\|\varphi\|_{H^\ell(\Ome)}, \quad \ell=\min\{2, s\};
\end{align}
for any $\bv\in \bH^1_\perp(\Omega)\cap \bH^s(\Omega) (s>1)$ the projection $\mathcal{R}_h\bv$ satisfies
\begin{align}\label{3.25}
	\|\mathcal{R}_h\bv-\bv\|_{L^2(\Ome)}+h\| \nabla(\mathcal{R}_h\bv-\bv) \|_{L^2(\Ome)}
	\leq Ch^m\|\bv\|_{H^m(\Ome)}, m=\min\{3, s\}.
\end{align} 

\begin{theorem}\label{thm3.5}
	The solution of Algorithm \ref{alg2.1} satisfies the following error estimates:
	\begin{align}
		&\max_{0\leq n\leq N} \Bigl[ \sqrt{\mu}\|\nabla(\mathbf{u}(t_{n})-\mathbf{u}_{h}^{n})\|_{L^2(\Omega)}
		+\sqrt{\kappa_2} \|\eta(t_n)-\eta_h^{n-1+\theta}\|_{L^2(\Ome)}\nonumber\\
		&\hskip .5in
		+\sqrt{\kappa_3} \|\xi(t_n)-\xi_h^n\|_{L^2(\Ome)} \Bigr]
		\leq C_1(T) \Delta t +C_2(T)h^2,\label{2.32} \\
		&\bigg[ \Delta t \sum_{n=0}^N \frac{K}{\mu_f} \|\nabla(p(t_n)-p_h^n) \|_{L^2(\Ome)}^2 \bigg]^{\frac12}
		\leq C_1(T) \Delta t +C_2(T)h, \label{2.33}
	\end{align}
	provided that $\Delta t=O(h^2)$ if $\theta=0$ and $\Delta t>0$ if $\theta=1$. Here
	\begin{align*}
		C_1(T):=&C(\|q_t\|_{L^2(0,T;L^2(\Omega))}+\|q_{tt}\|_{L^2(0,T;H^{-1}(\Omega))}),\\
		C_2(T):=&C(\|\xi\|_{L^{\infty}(0,T;H^2(\Ome))}+\|\xi_t\|_{L^{\infty}(0,T;H^2(\Ome))}+\|\operatorname{div}\mathbf{u}_t\|_{L^{\infty}(0,T;H^2(\Ome))} \\
		&
		+\|\eta\|_{L^{\infty}(0,T;H^2(\Ome))}
		+\|\nabla\bu\|_{L^{\infty}(0,T;H^2(\Ome))}).
	\end{align*}
\end{theorem}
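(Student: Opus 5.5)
We plan to follow the standard projection-splitting argument of \cite{B27}, but replace the plain elliptic projections by a projection built from an auxiliary (Stokes-type) problem, so that the displacement and $\xi$ errors are compared against a pair that is optimal in $L^2$. At each time $t$ we define $(\widetilde{\mathbf{u}}_h,\widetilde{\xi}_h)\in\mathbf{V}_h\times M_h$ by
\begin{align*}
\mu(\varepsilon(\widetilde{\mathbf{u}}_h),\varepsilon(\mathbf{v}_h))-(\widetilde{\xi}_h,\operatorname{div}\mathbf{v}_h)&=\mu(\varepsilon(\mathbf{u}),\varepsilon(\mathbf{v}_h))-(\xi,\operatorname{div}\mathbf{v}_h)\quad\forall\mathbf{v}_h\in\mathbf{V}_h,\\
\kappa_3(\widetilde{\xi}_h,\varphi_h)+(\operatorname{div}\widetilde{\mathbf{u}}_h,\varphi_h)&=\kappa_3(\xi,\varphi_h)+(\operatorname{div}\mathbf{u},\varphi_h)\quad\forall\varphi_h\in M_h,
\end{align*}
and we take $\mathcal{S}_h$ from \eqref{3.21}--\eqref{3.22} for $\eta$. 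Using the inf-sup condition for the generalized Taylor--Hood pair together with \eqref{3.25} and \eqref{3.20}, we obtain $\|\nabla(\mathbf{u}-\widetilde{\mathbf{u}}_h)\|+\|\xi-\widetilde{\xi}_h\|\le Ch^2(\|\mathbf{u}\|_{H^3}+\|\xi\|_{H^2})$. A duality (Aubin--Nitsche) argument on this auxiliary saddle-point problem, assuming $H^2$-regularity of the dual Stokes-type problem with Neumann data, then gives the $h^2$-bound needed for $\|\operatorname{div}(\mathbf{u}-\widetilde{\mathbf{u}}_h)\|$ and for the time derivatives; differentiating the definition in $t$ gives the analogous bounds for $\mathbf{u}_t,\xi_t$. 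This is where the norms of $\xi,\xi_t,\operatorname{div}\mathbf{u}_t,\nabla\mathbf{u}$ in $C_2(T)$ enter.

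Next we split the errors as $\mathbf{u}(t_n)-\mathbf{u}_h^n=(\mathbf{u}-\widetilde{\mathbf{u}}_h)+E_{\mathbf{u}}^n$, $\xi-\xi_h^n=(\xi-\widetilde\xi_h)+E_\xi^n$, and $\eta-\eta_h^n=(\eta-\mathcal{S}_h\eta)+E_\eta^n$. Subtracting \eqref{3.14}--\eqref{3.16} from \eqref{2.15}--\eqref{2.17} at $t_{n+1}$, the auxiliary projection annihilates the spatial consistency terms in the first two equations. What remains are the time-truncation terms $R^{n+1}=\eta_t(t_{n+1})-d_t\eta(t_{n+1})$, the term $\kappa_1(\eta^{n+1}-\eta^{n+\theta})$ when $\theta=0$, and $(d_t(\eta-\mathcal{S}_h\eta),\psi_h)$ together with the $\kappa_1$-coupling term in the third equation. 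We test the error equations with $\mathbf{v}_h=d_tE_{\mathbf{u}}^{n+1}$, $\varphi_h=E_\xi^{n+1}$ (after applying $d_t$ to the second equation), and $\psi_h=\kappa_1E_\xi^{n+1}+\kappa_2E_\eta^{n+\theta}$. With these choices the cross terms $(E_\xi,\operatorname{div}d_tE_{\mathbf{u}})$ and $\kappa_1(d_tE_\eta,E_\xi)$ cancel, exactly as in the stability proof of \cite{B27}. Using $2(a-b,a)=|a|^2-|b|^2+|a-b|^2$ we then get a telescoping energy $\mu\|\varepsilon(E_{\mathbf{u}}^{n+1})\|^2+\kappa_3\|E_\xi^{n+1}\|^2+\kappa_2\|E_\eta^{n+\theta}\|^2$ plus the dissipation $\frac{\Delta t}{\mu_f}\|K^{1/2}\nabla E_p^{n+1}\|^2$.

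We bound the right-hand side by Cauchy--Schwarz and Young. The time-truncation terms give $\Delta t\,\|q_{tt}\|_{L^2(H^{-1})}$-type contributions, using the $H^{-1}$ duality against $\nabla E_p$; this is what produces $C_1(T)$. The projection terms are $O(h^2)$ by \eqref{3.23} and the auxiliary estimates. For $\theta=0$ the lagged term $\kappa_1(E_\eta^{n+1}-E_\eta^n,E_\xi^{n+1})$ has to be absorbed. We write it through $\psi_h$-duality as $\Delta t(d_tE_\eta,\cdot)$ and use an inverse inequality, $\|\nabla\varphi_h\|\le Ch^{-1}\|\varphi_h\|$, which closes only under $\Delta t=O(h^2)$. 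The discrete Gronwall inequality, Korn's inequality on $\mathbf{V}_h$, and the initial errors (zero or $O(h^2)$ by the choice $\mathcal{R}_h,\mathcal{Q}_h$ in Step (i)) then yield \eqref{2.32}. Estimate \eqref{2.33} follows from the summed dissipation term plus $\|\nabla(p-\kappa_1\widetilde\xi_h-\kappa_2\mathcal{S}_h\eta)\|=O(h)$.

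We expect the main obstacle to be the first step: proving $L^2$-optimality ($h^2$) of the auxiliary projection for $\xi$ and its time derivative with pure traction boundary conditions and the rigid-motion constraint, since this requires a duality argument with elliptic regularity for the dual saddle-point problem. A secondary difficulty is the $\theta=0$ splitting term, which forces the mesh condition stated in the theorem.
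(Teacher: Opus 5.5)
There is a genuine gap in the Darcy error equation. Subtract (3.16) from (2.17) at $t_{n+1}$ and insert your splitting. Besides $R^{n+1}$ and $(d_t(\eta-\mathcal{S}_h\eta)^{n+1},\psi_h)$, you are left with the diffusion consistency term
\[
\frac{\kappa_1}{\mu_f}\bigl(K\nabla(\xi-\widetilde{\xi}_h)(t_{n+1}),\nabla\psi_h\bigr)
=\frac{\kappa_1}{\mu_f}\bigl(K\nabla(\mathcal{S}_h\xi-\widetilde{\xi}_h)(t_{n+1}),\nabla\psi_h\bigr).
\]
This term survives because only the $\kappa_2\eta$ part of $p=\kappa_1\xi+\kappa_2\eta$ is projected by $\mathcal{S}_h$. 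Your $\widetilde\xi_h$ is defined through the elasticity and constraint forms, not through $(K\nabla\cdot,\nabla\cdot)$.

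The discrete function $\mathcal{S}_h\xi-\widetilde\xi_h\in M_h$ is $O(h^2)$ in $L^2$ but generically only $O(h)$ in $H^1$. Meanwhile $\nabla\psi_h$ is controlled only by the dissipation. So the natural bound is $\frac{1}{2\mu_f}\|K^{1/2}\nabla\psi_h\|^2_{L^2(\Omega)}+Ch^2$. After summation this adds $Ch^2$ to the squared energy, i.e. $O(h)$ to $\sqrt{\mu}\|\nabla E_{\mathbf{u}}\|+\sqrt{\kappa_2}\|E_\eta\|+\sqrt{\kappa_3}\|E_\xi\|$. Your claim that ``the projection terms are $O(h^2)$'' fails precisely here. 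The argument as written gives (2.33), but not the $h^2$ rate in (2.32).

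The missing idea is that the projection of $p$ itself must be the elliptic projection. There are two ways to arrange this.

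You can keep $(\widetilde{\mathbf{u}}_h,\widetilde\xi_h)$ and set $\widetilde\eta_h:=\kappa_2^{-1}(\mathcal{S}_hp-\kappa_1\widetilde\xi_h)$. Then $\kappa_1\widetilde\xi_h+\kappa_2\widetilde\eta_h=\mathcal{S}_hp$, so the diffusion term vanishes for both $\theta=0$ and $\theta=1$, since (3.16) uses $\xi_h^{n+1},\eta_h^{n+1}$ in either case. Moreover $\eta-\widetilde\eta_h=\kappa_2^{-1}[(p-\mathcal{S}_hp)-\kappa_1(\xi-\widetilde\xi_h)]=O(h^2)$ in $L^2$, with $\kappa_2^{-1}=c_0+\alpha^2/\lambda$ bounded. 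Every remaining projection term is then an $L^2$ pairing.

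Alternatively, follow the route of \cite{B27} that the paper cites, with $\mathcal{R}_h$ for $\mathbf u$ and $\mathcal{S}_h$ for the pressure-type variables. The Darcy term then drops out. The price is a surviving term $(\xi-\mathcal{S}_h\xi,\operatorname{div} d_t\Theta_{\mathbf u}^{n+1})$ in the elasticity equation, where $\Theta_{\mathbf u}^{n}:=\mathcal{R}_h\mathbf u(t_n)-\mathbf u_h^n$. It is handled by summation by parts in time, which is why $\|\xi_t\|_{L^\infty(0,T;H^2(\Omega))}$ appears in $C_2(T)$.

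Finally, the step you flagged as the main obstacle is not one. The bounds $\|\xi-\widetilde\xi_h\|\le Ch^2$ and $\|\operatorname{div}(\mathbf u-\widetilde{\mathbf u}_h)\|\le\sqrt d\,\|\nabla(\mathbf u-\widetilde{\mathbf u}_h)\|$ already follow from the Brezzi energy estimate for the $\mathbf{P}_2$--$P_1$ pair. No duality argument is needed for this theorem.
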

As for the detailed proof of Theorem \ref{thm3.5}, one can refer to \cite{B27}.

Next, we give the optimal convergence order in $L^2$-norm error estimate for the displacement  $\mathbf{u}$. 

\begin{theorem}\label{thm2.5}
The numerical solution of Algorithm \ref{alg2.1} satisfies the following error estimates
	\begin{eqnarray}\label{3.30}
		\|\mathbf{u}(t_{n})-\mathbf{u}_h^{n}\|_{L^{2}(\Omega)} \leq Ch^{3}.
	\end{eqnarray}
\end{theorem}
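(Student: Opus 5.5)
Since Theorem \ref{thm3.5} gives only $O(h^2)$ in the energy norm, the $L^2$ bound $O(h^3)$ for $\mathbf{u}$ has to come from a duality (Aubin--Nitsche) argument applied at a fixed time level $t_n$. The plan is to introduce an auxiliary problem on $\mathbf{V}_h\times M_h$. At time $t_n$, let $(\tilde{\mathbf{u}}_h,\tilde\xi_h)\in\mathbf{V}_h\times M_h$ solve the discrete generalized Stokes system
\begin{align*}
\mu(\varepsilon(\tilde{\mathbf{u}}_h),\varepsilon(\mathbf{v}_h))-(\tilde\xi_h,\operatorname{div}\mathbf{v}_h)&=(\mathbf{f},\mathbf{v}_h)+\langle\mathbf{f}_1,\mathbf{v}_h\rangle,\\
\kappa_3(\tilde\xi_h,\varphi_h)+(\operatorname{div}\tilde{\mathbf{u}}_h,\varphi_h)&=\kappa_1(\eta(t_n),\varphi_h).
\end{align*}
This is the exact ``steady'' discretization of \eqref{2.15}--\eqref{2.16} with the exact $\eta(t_n)$ as data. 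The error then splits as
\[
\mathbf{u}(t_n)-\mathbf{u}_h^n=(\mathbf{u}(t_n)-\tilde{\mathbf{u}}_h)+(\tilde{\mathbf{u}}_h-\mathbf{u}_h^n).
\]

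For the first term, the difference $(\mathbf{u}(t_n)-\tilde{\mathbf{u}}_h,\ \xi(t_n)-\tilde\xi_h)$ satisfies Galerkin orthogonality for the (perturbed) Stokes form. Combining the inf-sup condition with \eqref{3.25} and the $P_1$ interpolation estimate for $\xi$, I get
\[
\|\nabla(\mathbf{u}-\tilde{\mathbf{u}}_h)\|+\|\xi-\tilde\xi_h\|\le Ch^2\bigl(\|\mathbf{u}\|_{H^3}+\|\xi\|_{H^2}\bigr).
\]
I then pose the dual problem: find $(\mathbf{w},r)$ with $-\mu\operatorname{div}\varepsilon(\mathbf{w})+\nabla r=\mathbf{e}$ and $\kappa_3 r+\operatorname{div}\mathbf{w}=0$ under the natural Neumann conditions, where $\mathbf{e}$ is the error projected orthogonally to $\mathbf{RM}$, which ensures compatibility. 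I assume $H^2\times H^1$ regularity, $\|\mathbf{w}\|_{H^2}+\|r\|_{H^1}\le C\|\mathbf{e}\|$; this requires $\Omega$ convex or smooth and is the assumption I would state explicitly. Testing with the error, subtracting the interpolants $\mathcal{R}_h\mathbf{w}$ and $\mathcal{Q}_hr$, and using the orthogonality yields
\[
\|\mathbf{u}-\tilde{\mathbf{u}}_h\|\le Ch\cdot h^2=Ch^3.
\]

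The second term is the main obstacle. Subtracting the auxiliary problem from \eqref{3.14}--\eqref{3.15}, the difference $(\tilde{\mathbf{u}}_h-\mathbf{u}_h^n,\tilde\xi_h-\xi_h^n)$ solves a discrete Stokes system whose only data is $\kappa_1(\eta(t_n)-\eta_h^{n-1+\theta},\varphi_h)$. Stability of that system gives
\[
\|\tilde{\mathbf{u}}_h-\mathbf{u}_h^n\|_{H^1}\le C\|\mathcal{Q}_h\eta(t_n)-\eta_h^{n-1+\theta}\|_{(M_h)'}.
\]
Bounding this by the plain $L^2$ norm and invoking Theorem \ref{thm3.5} gives only $O(\Delta t+h^2)$, which is not enough. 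I would therefore use a discrete duality argument for the second term as well: test with the discrete dual solution $(\mathbf{w}_h,r_h)$ of the same Stokes system. This reduces the $L^2$ error of $\tilde{\mathbf{u}}_h-\mathbf{u}_h^n$ to a negative-norm quantity, namely $(\eta(t_n)-\eta_h^{n-1+\theta}, r_h)$ with $r_h$ bounded in $H^1$.

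The key step is to estimate $\|\eta(t_n)-\eta_h^{n-1+\theta}\|_{H^{-1}}$ at order $h^3$. For this I would compare $\eta_h$ with $\mathcal{S}_h\eta$ and redo the energy argument of Theorem \ref{thm3.5} in a negative norm. Concretely, I would test \eqref{3.16} with the discrete inverse-Laplacian (defined via $\mathcal{S}_h$) of the error, so that one derivative is gained, using the $\mathcal{S}_h$ duality estimate $\|\eta-\mathcal{S}_h\eta\|_{H^{-1}}\le Ch^3\|\eta\|_{H^2}$. The temporal error is absorbed only under the convention that the time-dependent part is treated as higher order, i.e. $\Delta t$ is small relative to $h^3$, or the constant $C$ implicitly contains $\Delta t/h^3$. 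I expect this to be the weakest point, and I would state that assumption explicitly. Combining the two terms by the triangle inequality then yields \eqref{3.30}.
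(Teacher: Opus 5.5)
\paragraph{Gap in the proposal.} Your splitting and the bound for $\mathbf{u}(t_n)-\tilde{\mathbf{u}}_h$ are fine; that part is the standard Taylor--Hood duality argument. You also locate the whole difficulty correctly: the part driven by $\kappa_1(\eta(t_n)-\eta_h^{n-1+\theta},\varphi_h)$ needs $\|\eta(t_n)-\eta_h^{n-1+\theta}\|_{(H^1)'}=O(h^3)$.

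The step that fails is the tool you propose for this, $\|\eta-\mathcal{S}_h\eta\|_{H^{-1}}\le Ch^3\|\eta\|_{H^2}$. It is false for $W_h=M_h$ of degree one. Duality gives $(\eta-\mathcal{S}_h\eta,\phi)=(K\nabla(\eta-\mathcal{S}_h\eta),\nabla(\chi-\chi_h))$ with $\chi$ the dual solution. Piecewise linears approximate $\chi$ in $H^1$ only to $O(h)$, however smooth $\chi$ is, so you get $h\cdot h=h^2$ and nothing more; negative-norm gains for the Ritz projection require degree at least two. The loss is real. In one dimension with constant $K$, $\mathcal{S}_h\eta$ is the nodal interpolant plus a constant, and on a uniform mesh $(\eta-\mathcal{S}_h\eta,\phi)=-\frac{h^2}{12}\int\eta''\phi\,dx+O(h^3)$ for mean-zero $\phi$.

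A sharper energy argument cannot rescue your second term either. Whenever diffusion is active, $\eta_h$ is tied to $\mathcal{S}_h$ rather than $\mathcal{Q}_h$: at a discrete steady state, $\kappa_1\xi_h+\kappa_2\eta_h=\mathcal{S}_hp$ up to a constant. So $\|\eta(t_n)-\eta_h^{n-1+\theta}\|_{(H^1)'}$ is itself in general only $O(h^2)$, and so is $\|\tilde{\mathbf{u}}_h-\mathbf{u}_h^n\|_{L^2}$. An $h^3$ rate would need extra structure. One example is negligible diffusion, where $\eta_h\approx\mathcal{Q}_h\eta$ and the $L^2$-projection error does gain a power of $h$ in $(H^1)'$.

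Separately, the statement has no $\Delta t$ term, while Theorem~\ref{thm3.5} contributes $C_1(T)\Delta t$. Your assumption $\Delta t\lesssim h^3$ is an extra hypothesis, not part of the claim.

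\paragraph{Comparison with the paper.} The paper poses a single continuous dual problem $(\mathbf{w},z)$ against the full error and splits off $(\mathcal{R}_h\mathbf{w},\mathcal{Q}_hz)$. It reaches the same crux, $\kappa_1(\eta-\eta_h^{n-1+\theta},z)$, and bounds it by $C_2(T)h^2\|z\|_{L^2}$. It then gains the missing $h$ from the elementwise inequality $\|z\|_{L^2(K)}\le\frac{h}{\pi}\|\nabla z\|_{L^2(K)}$. But \eqref{3-25-1} applies only to functions with $\int_K z\,dx=0$ on each element, which the dual pressure does not satisfy; at most its global mean vanishes. The $C_1(T)\Delta t$ contribution from \eqref{2.32} is also simply dropped.

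So your observation that the plain $L^2$ bound on $\eta-\eta_h$ is insufficient is right. Neither your route nor the paper's gets past it, and with $W_h=M_h$ piecewise linear the estimate \eqref{3.30} should not be expected in general.
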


\begin{proof}
Define a bilinear form  
\begin{align}
	B((\mathbf{u}, \xi) ;(\mathbf{v},q))
	=\mu(\varepsilon(\mathbf{u}),\varepsilon(\mathbf{v}))
	-(\xi, \nabla \cdot \mathbf{v})
	+(q, \nabla \cdot \mathbf{u})
	+k_{3}(\xi, q). 
\end{align}
Using (\ref{2.15}) and (\ref{2.16}), we have
	\begin{align}
		&B((\mathbf{u}, \xi) ;(\mathbf{u}, \xi))=\mu|\mathbf{u}|_{H^{1}( \Omega)}+k_{3}\|\xi\|_{L^2(\Omega)}, \\
		&B((\mathbf{u}, \xi) ;(\mathbf{v}, q))
		\leqslant C\left(|\mathbf{u}|_{H^1(\Omega)}+\|\xi\|_{L^2(\Omega)}\right)\left(|\mathbf{v}|_{H^1(\Omega)}+\|q\|_{L^2(\Omega)}\right).\label{3.18}
	\end{align}
	
Next, we introduce the following auxiliary problem: find $(\mathbf{w},z)\in H^1(\Omega)\cap L^2_0(\Omega)$ such that 
\begin{align}
	-\mu\operatorname{div}\varepsilon(\mathbf{w})-\nabla z&=\mathbf{u}-\mathbf{u}_{h}^{n} \quad \mathrm{in~} \Omega,\label{3.31} \\
	\nabla\cdot\mathbf{w}-\kappa_3 z&=0  \quad \mathrm{in~} \Omega, \label{3.32}\\
	\mu\varepsilon(\mathbf{w})\mathbf{n}+z\mathbf{n}&=\mathbf{0}\quad \mathrm{on~} \partial\Omega.
\end{align}

From \cite{temam}, we know that
\begin{align}\label{2025-4-23}
	\|\mathbf{w}\|_{H^2(\Omega)}+\|z\|_{H^1(\Omega)}\leq C\|\mathbf{u}-\mathbf{u}_{h}^{n}\|_{L^2(\Omega)}.
\end{align}	

Multiplying \eqref{3.31} by $\mathbf{u}-\mathbf{u}_{h}^{n}$ and \eqref{3.32} by $\xi-\xi_{h}^{n}$, we get
\begin{align*}
	\|\mathbf{u}-\mathbf{u}_{h}^{n}\|_{L^{2}(\Omega)}^{2}
	&=(\mathbf{u}-\mathbf{u}_{h}^{n},\mathbf{u}-\mathbf{u}_{h}^{n})\\
	&=(-\mu\operatorname{div}\varepsilon(\mathbf{w})-\nabla z,\mathbf{u}-\mathbf{u}_{h}^{n})\\
	&=(-\mu\operatorname{div}\varepsilon(\mathbf{w}),\mathbf{u}-\mathbf{u}_{h}^{n})
	-(\nabla z,\mathbf{u}-\mathbf{u}_{h}^{n})\\
	&=\mu(\varepsilon(\mathbf{w}), \varepsilon(\mathbf{u}-\mathbf{u}_{h}^{n}))
    +(z, \nabla\cdot(\mathbf{u}-\mathbf{u}_{h}^{n}))\\
    &=\mu(\varepsilon(\mathbf{w}), \varepsilon(\mathbf{u}-\mathbf{u}_{h}^{n}))
    +(z, \nabla\cdot(\mathbf{u}-\mathbf{u}_{h}^{n}))\\
	&\quad-(\nabla\cdot \mathbf{w},(\xi-\xi_{h}^{n}))+k_{3}(z,(\xi-\xi_{h}^{n}))\\
	&=B((\mathbf{u}-\mathbf{u}_{h}^{n}, \xi-\xi_{h}^{n}) ;(\mathbf{w}, z))\\
	&=B((\mathbf{u}-\mathbf{u}_{h}^{n}, \xi-\xi_{h}^{n}) ;(\mathbf{w}-\mathcal{R}_{h}\mathbf{w}, z-\mathcal{Q}_hz))\\
	&\quad+B((\mathbf{u}-\mathbf{u}_{h}^{n}, \xi-\xi_{h}^{n}) ;(\mathcal{R}_{h}\mathbf{w}, \mathcal{Q}_hz)).
\end{align*}

Using \eqref{3.18}, \eqref{2.32}, \eqref{3-25-1} and \eqref{2025-4-23}, we get
\begin{align*}
	\|\mathbf{u}-\mathbf{u}_{h}^{n}\|_{L^{2}(\Omega)}^{2}
	&\leq C\left(|\mathbf{u}-\mathbf{u}_{h}^{n}|_{H^1(\Omega)}+\|\xi-\xi_{h}^{n}\|_{L^{2}(\Omega)}\right)\left(|\mathbf{w}-\mathcal{R}_{h}\mathbf{w}|_{H^1(\Omega)}+\|z-\mathcal{Q}_hz\|_{L^{2}(\Omega)}\right)\\
	&\quad+\kappa_1\left(\eta-\eta_{h}^{n-1+\theta},\mathcal{Q}_hz-z\right)
	+\kappa_1\left(\eta-\eta_{h}^{n-1+\theta},z\right)\\
	&\leq C_2(T)h^2\left(Ch\|\mathbf{w}\|_{H^2(\Omega) }+Ch\|z\|_{H^1(\Omega)}\right)
	+C_2(T)h^2(Ch\|z\|_{H^{1}(\Omega)}+\|z\|_{L^2(\Omega)})\\
	&=C_2(T)h^2\left(Ch\|\mathbf{w}\|_{H^2(\Omega) }+Ch\|z\|_{H^1(\Omega)}\right)
	+C_2(T)h^2(Ch\|z\|_{H^{1}(\Omega)}\\
	&\quad+(\sum_{K\in\mathcal{T}_h}\|z\|^2_{L^2(K)})^{\frac{1}{2}})\\
	&\leq C_2(T)h^2\left(Ch\|\mathbf{w}\|_{H^2(\Omega) }+Ch\|z\|_{H^1(\Omega)}\right)
	+C_2(T)h^2(Ch\|z\|_{H^{1}(\Omega)}\\
	&\quad+(\sum_{K\in\mathcal{T}_h}\frac{h^2}{\pi^2}\|\nabla z\|^2_{L^2(K)})^{\frac{1}{2}})\\
	&= C_2(T)h^2\left(Ch\|\mathbf{w}\|_{H^2(\Omega) }+Ch\|z\|_{H^1(\Omega)}\right)
	+C_2(T)h^2(Ch\|z\|_{H^{1}(\Omega)}\\
	&\quad+(\frac{h^2}{\pi^2}\|\nabla z\|^2_{L^2(\Omega)})^{\frac{1}{2}})\\
	&\leq Ch^3\left(\|\mathbf{w}\|_{H^2(\Omega) }+\|z\|_{H^1(\Omega)}\right)
	+Ch^2(h\|z\|_{H^{1}(\Omega)}+h\|\nabla z\|_{L^2(\Omega)})\\
	&\leq Ch^3\left(\|\mathbf{w}\|_{H^2(\Omega) }+\|z\|_{H^1(\Omega)}\right)
	+Ch^3\|z\|_{H^{1}(\Omega)}\\
	&\leq Ch^{3}\|\mathbf{u}-\mathbf{u}_{h}^{n}\|_{L^{2}(\Omega)},
\end{align*}
which implies that (\ref{3.30}) holds. The proof is complete.
\end{proof}

\section{Numerical tests}\label{sec-3.4}
In this section, we show some numerical tests to verify the theoretical results and and simulate brain edema by Algorithm \ref{alg2.1}. 
\begin{example}
	Let $\Omega= (0,1)\times(0,1)$, $T=1$, $\Gamma_{1}= \{ (0,y);~0\leq y\leq1 \}$, $\Gamma_{2}= \{(1,y);~0\leq y\leq1 \}$, $\Gamma_{3}= \{(x,1);~0\leq x\leq1 \}$, $\Gamma_{4} = \{(x,0);~0\leq x\leq1\}$.
\end{example}

The body force $\mathbf{f}$ and source term $\phi$ are given by
\begin{align*}
	\mathbf{f}=&
	\begin{pmatrix}
		-t(\lambda+\mu)+\alpha\cos(x+y)\mathrm{e}^t\\
		-t(\lambda+\mu)+\alpha\cos(x+y)\mathrm{e}^t
	\end{pmatrix},\\
	\phi = & (c_0+2\frac{K}{\mu_{f}})\sin(x+y)\mathrm{e}^t+\alpha(x+y).
\end{align*}

The initial and boundary conditions are
\begin{align*}
	p(x,y,t) = \sin(x+y)\mathrm{e}^t  &\qquad\mbox{on }\partial\Omega_T,\\
	u_1(x,y,t)= \frac{t}{2}x^{2} &\qquad\mbox{on }\Gamma_j\times [0,T],\, j=1,3,\\
	u_2(x,y,t) = \frac{t}{2}y^{2} &\qquad\mbox{on }\Gamma_j\times [0,T],\, j=,2,4,\\
	\sigma(\mathbf{u})\bf{n}-\alpha p\bf{n} = \mathbf{f}_1 &\qquad \mbox{on } \p\Ome_T\backslash\Gamma_j,\\
	\mathbf{u}(x,y,0) = \mathbf{0},  \quad p(x,y,0) =\sin(x+y) &\qquad\mbox{in } \Omega,
\end{align*}
where the function $\mathbf{f}_1$ can be obtained by \eqref{1.4}. It is easy to check that the exact solutions are
\begin{align*}
	\mathbf{u}(x,y,t)=
	\begin{pmatrix}
		u_1(x,y,t)\\
		u_2(x,y,t)
	\end{pmatrix}
	=\frac{t}{2}
	\begin{pmatrix}
		 x^2\\
		 y^2
	\end{pmatrix},\qquad
	p(x,y,t) =\sin(x+y)\mathrm{e}^{t}.
\end{align*}

The values of physical parameters are given in Table \ref{table_1} and the other variables $\kappa_1$ as follows:
\begin{table}[H]
	\begin{center}
		\caption{Values of physical parameters}\label{table_1}
		\begin{tabular}{ccc}
			\hline
			Parameters 	& Description		& Values \\
			\hline
			$E$ 			& Young's modulus 							& 599.6\\
			$\nu$ 		& Poisson ratio 							& 0.499\\
			$\lambda$ 		& Lam$\acute{e}$ constant  							& 1e6\\
			$\mu$ 		& Lam$\acute{e}$ constant  							& 2e2\\
			$\alpha$ 	& Biot-Willis constant 						& 0.6\\
			$c_0$  		& Constrained specific storage coefficient 	& 0.8\\
			$K$ 			& Permeability tensor 		& $\mathbf{I}$\\
			$\mu_f$ 	& Viscosity coefficient 						& 1\\
			\hline
		\end{tabular}
	\end{center}
\end{table}

\begin{table}[H]
	\begin{center}
		\caption{Spatial errors and convergence orders of $\mathbf{u}$ and $p$ with $\Delta t=h^2$ }\label{tab11}
		\begin{tabular}{l c c c c c c c c}
			\hline
			$h$& $\|\mathbf{u}-\mathbf{u}_h\|_{L^2}$  &  order& $\|\mathbf{u}-\mathbf{u}_h\|_{H^1}$ & order& $\|p-p_h\|_{L^2}$ &order& $\|p-p_h\|_{H^1}$&order
			\\ \hline
		$1/2$ & 2.3917e-6& & 3.1786e-5 & & 0.0462& & 0.4446 & \\
		$1/4$ &1.8952e-7 &3.66 & 5.7760e-6&2.46 &0.0109 &2.08 & 0.2231&0.99 \\
		$1/8$ &1.6219e-8&3.55&  1.0243e-6&2.50 &0.0027&2.01&  0.1116&1.00 \\
		$1/16$ &1.5440e-9&3.39 &  1.8102e-7&2.50&6.643e-4&2.02 &  0.0558& 1.00
		\\ \hline
		\end{tabular}
	\end{center}
\end{table}

\begin{figure}[H]
	\centering
	\subfigure[]{
		\includegraphics[width=0.45\textwidth]{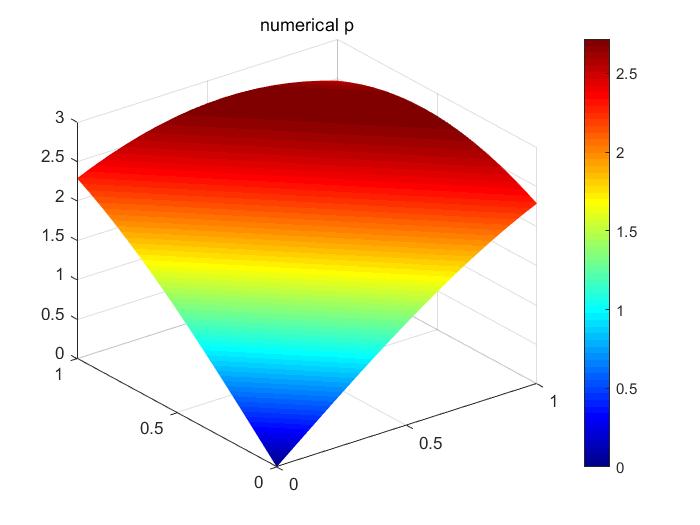}}
	\hfill
	\subfigure[]{
		\includegraphics[width=0.45\textwidth]{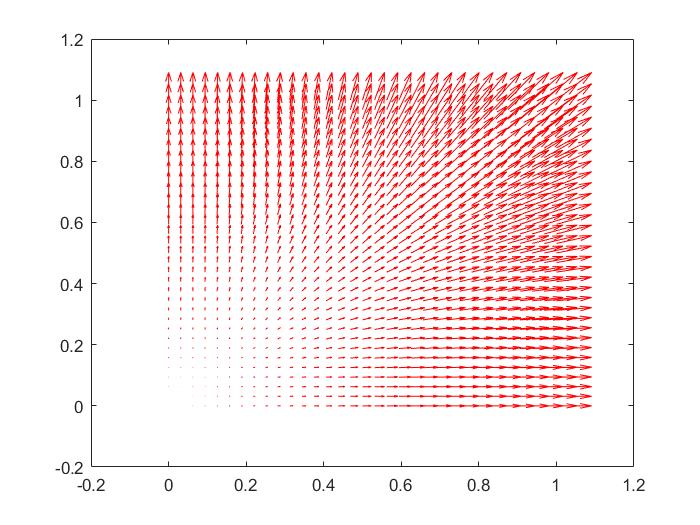}}
	\caption{(a) surface plot of the computed pressure $p$ and (b) arrow plot of the computed displacement $\mathbf{u}$ at $T$.}\label{figure_p11}
\end{figure}
\begin{figure}[H]
	\centering
	\subfigure[]{
		\includegraphics[width=0.45\textwidth]{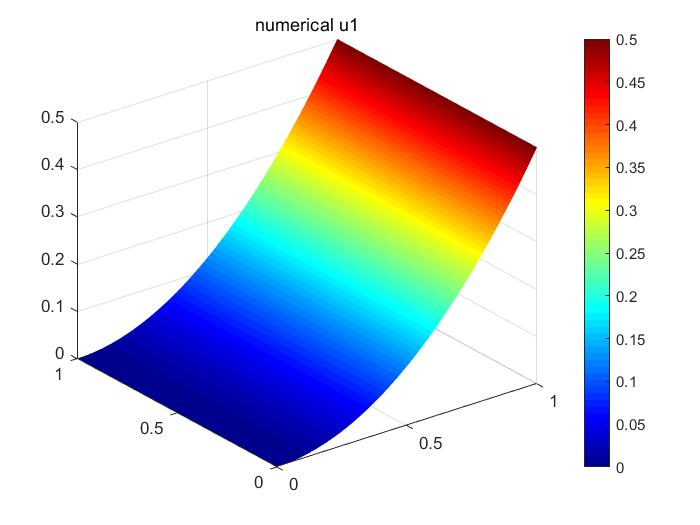}}
	\hfill
	\subfigure[]{
		\includegraphics[width=0.45\textwidth]{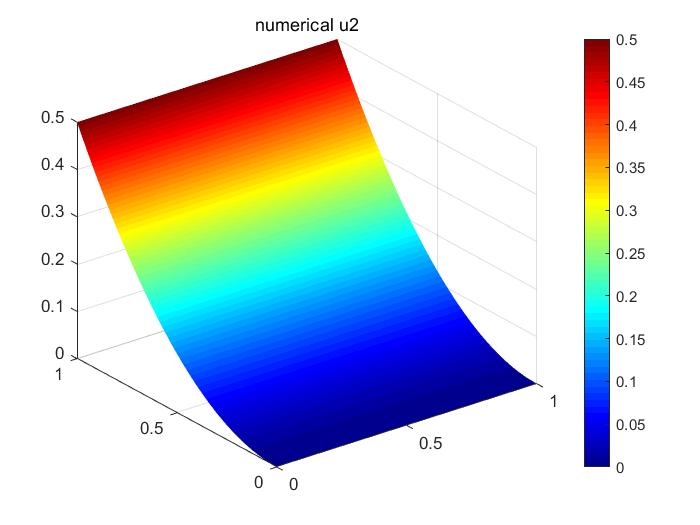}}
	\caption{Surface plot of the computed displacement (a) $u_1$ and (b) $u_2$ at the terminal time $T$, respectively.}\label{figure_p12}
\end{figure}

In this paper, the order of spatial error convergence is defined as 
\begin{align*}
	{\rm{order}}=\frac{\log(R(h)/R(\frac12h))}{\log2},
\end{align*}
where $R(h)$ is the spatial error with mesh $h$ .

Table \ref{tab11} displays the computed $L^2( \Omega)  $-norm and $H^1( \Omega) $-norm errors of $\mathbf{u}$, $p$ and the convergence orders with respect to $h$ at the terminal time $T$.
Evidently, the convergence orders are consistent with Theorem \ref{thm3.5} and Theorem \ref{thm2.5}. Figure \ref{figure_p11}-Figure \ref{figure_p12} show the surface plot of the computed  pressure $p$, displacement $u_1$, $u_2$ and the arrow plot of the computed displacement $u$ at the terminal time $T$  with mesh parameters $h= \frac{1}{16}$ and $\Delta t= h^2$, respectively, which coincide with the exact solution.

\begin{example}
Let $\Omega= (0,1)\times(0,1)$, $T=1$, $\Gamma_{1}= \{ (0,y);~0\leq y\leq1 \}$, $\Gamma_{2}= \{(1,y);~0\leq y\leq1 \}$, $\Gamma_{3}= \{(x,1);~0\leq x\leq1 \}$, $\Gamma_{4} = \{(x,0);~0\leq x\leq1\}$.
\end{example}

The body force $\mathbf{f}$ and source term $\phi$ are as follows
\begin{align*}
	\mathbf{f}=&
	\begin{pmatrix}
	(-2(\lambda+\mu)\pi^2t^2+\alpha\pi t)\cos(\pi x)\cos(\pi y)\\
	(2(\lambda+\mu)\pi^2t^2-\alpha\pi t)\sin(\pi x)\sin(\pi y)
	\end{pmatrix},\\
	\phi = & (c_0+4\alpha t+2\frac{K}{\mu_{f}}\pi^2 t)\sin(\pi x)\cos(\pi y).
\end{align*}

The initial and boundary conditions are
\begin{align*}
	p = t\sin(\pi x)\cos(\pi y)  &\qquad\mbox{on }\partial\Omega_T,\\
	u_1 = -t^2\cos(\pi x)\cos(\pi y) &\qquad\mbox{on }\Gamma_j\times [0,T],\, j=1,3,\\
	u_2 = t^2\sin(\pi x)\sin(\pi y) &\qquad\mbox{on }\Gamma_j\times [0,T],\, j=,2,4,\\
	\sigma\bf{n}-\alpha p\bf{n} = \mathbf{f}_1, &\qquad \mbox{on } \p\Ome_T\backslash\Gamma_j,\\
	\mathbf{u}(x,y,0) = \mathbf{0},  \quad p(x,y,0) =0 &\qquad\mbox{in } \Ome.
\end{align*}

It is easy to check that the exact solution are
\begin{align*}
	\mathbf{u}(x,y,t)=t^2
	\begin{pmatrix}
		-\cos(\pi x)\cos(\pi y)\\
		\sin(\pi x)\sin(\pi y)
	\end{pmatrix},\qquad
	p(x,y,t) = t\sin(\pi x)\cos(\pi y).
\end{align*}

The values of physical parameters are given in Table \ref{table_2}.
\begin{table}[H]
	\begin{center}
		\caption{Values of physical parameters}\label{table_2}
		\begin{tabular}{ccc}
			\hline
			Parameters 	& Description		& Values \\
			\hline
			$E$ 			& Young's modulus 							& 2.91e4\\
			$\nu$ 		& Poisson ratio 							& 0.454\\
			$\lambda$ 		& Lam$\acute{e}$ constant  							& 1e5\\
			$\mu$ 		& Lam$\acute{e}$ constant  							& 1e4\\
			$\alpha$ 	& Biot-Willis constant 						& 0.05\\
			$c_0$  		& Constrained specific storage coefficient 	& 0.88\\
			$K$ 			& Permeability tensor 		& $\mathbf{I}$\\
			$\mu_f$ 	& Viscosity coefficient						& 1\\
			\hline
		\end{tabular}
	\end{center}
\end{table}

\begin{table}[H]
	\begin{center}
		\caption{Spatial errors and convergence orders of $\mathbf{u} $ and $p$ with $\Delta t=h^2$ }\label{tab21}
		\begin{tabular}{l c c c c c c c c}
			\hline
			$h$& $\|\mathbf{u}-\mathbf{u}_h\|_{L^2}$  &  order& $\|\mathbf{u}-\mathbf{u}_h\|_{H^1}$ & order
			& $\|p-p_h\|_{L^2}$ &order& $\|p-p_h\|_{H^1}$&order 
			\\ \hline
			$1/4$ & 3.85e-2& & 0.9747 & & 0.0520& & 0.8534 & \\
			$1/8$ &2.40e-3 &4.00 & 0.1437&2.76 &0.0144 &1.85 & 0.4338&0.98 \\
			$1/16$ &1.74e-4&3.79&  0.0223&2.69 &3.70e-3&1.96 & 0.2178&0.99 \\
			$1/32$ &1.54e-5&3.50 &  0.0004&2.49&9.32e-4&1.99& 0.1090& 1.00\\ \hline
		\end{tabular}
	\end{center}
\end{table}

\begin{figure}[H]
	\centering
	\subfigure[]{
	\includegraphics[width=0.45\textwidth]{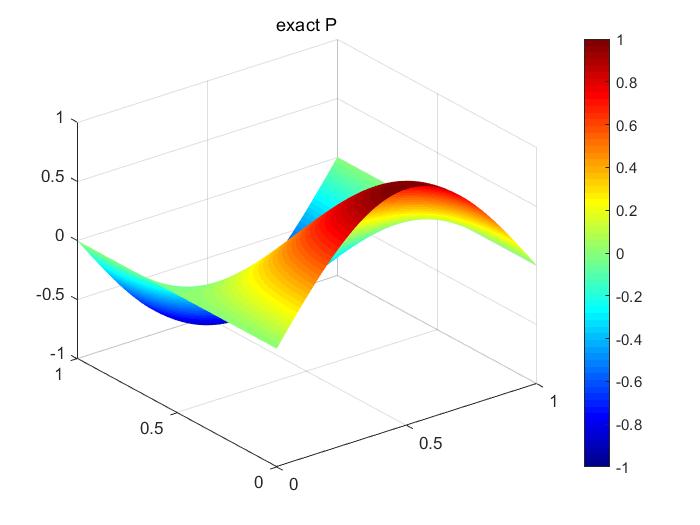}
}
	\hfill
	\subfigure[]{
	\includegraphics[width=0.45\textwidth]{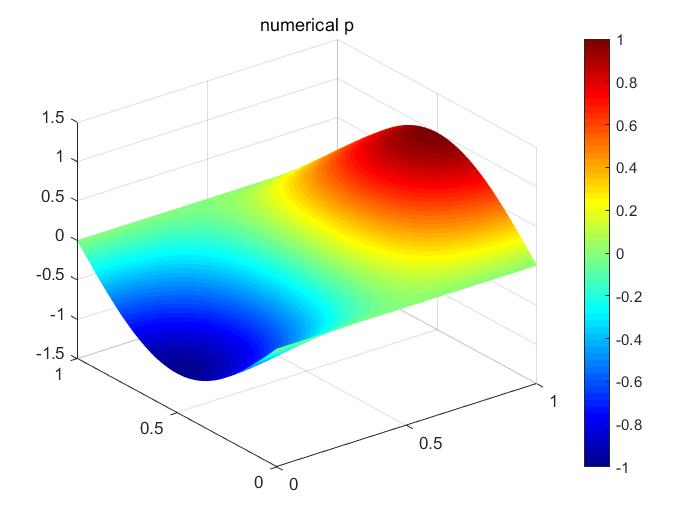}}
	\caption{The solution $p$ at the terminal time $T$: (a) the exact solution, (b) the numerical solution.}\label{f3.5}
\end{figure}
\begin{figure}[H]
	\centering
	\subfigure[]{
		\includegraphics[width=0.45\textwidth]{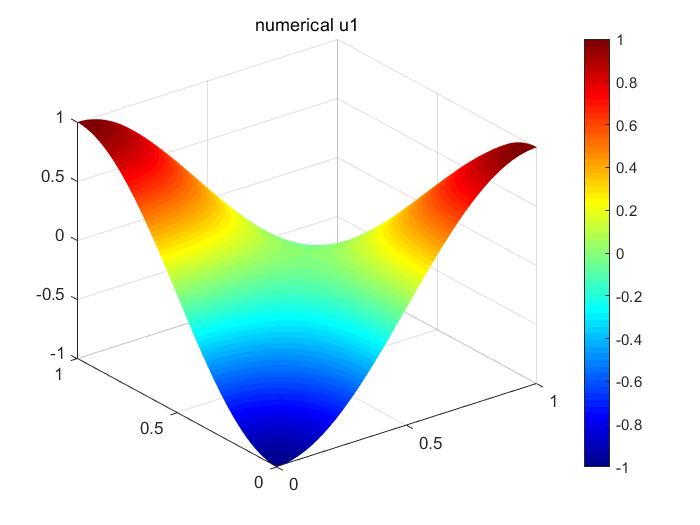}}
	\hfill
	\subfigure[]{
		\includegraphics[width=0.45\textwidth]{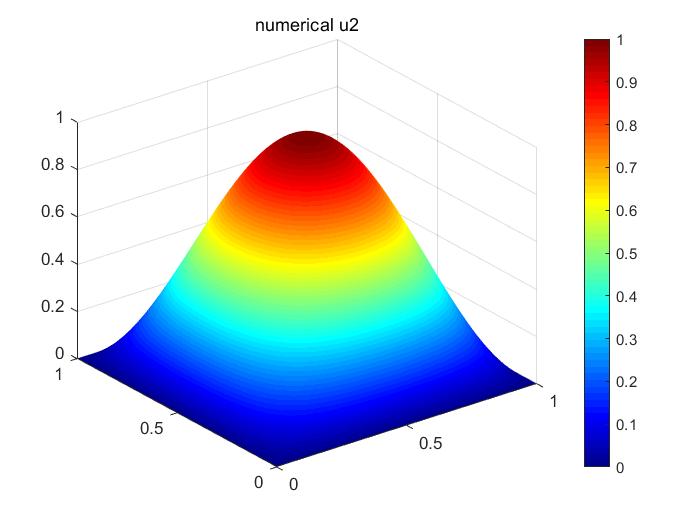}}
	\caption{Surface plots of the computed displacement (a) $u_1$   and (b) $u_2$  at the terminal time $T$, respectively.}\label{f3.7}
\end{figure}
\begin{figure}[H]
	\centering
	\includegraphics[width=8cm]{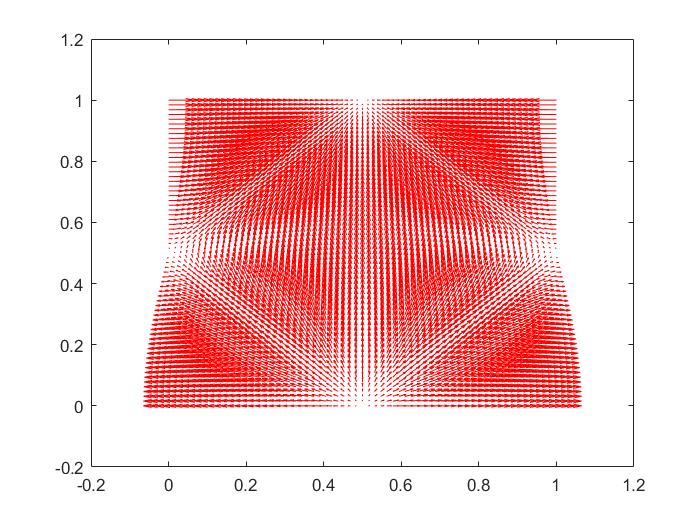}
	\caption{Arrow plot of the computed displacement $\mathbf{u}$ at $T$.}\label{f3.9}
\end{figure}

Table \ref{tab21} displays the computed $L^2( \Omega) $-norm and $H^1( \Omega)$-norm errors of $\mathbf{u}$, $p$ and the convergence orders with respect to $h$ at the terminal time $T$.
Evidently, the convergence orders are consistent with Theorem \ref{thm3.5} and Theorem \ref{thm2.5}. Figure \ref{f3.5}-Figure \ref{f3.9} show the surface plot of the computed pressure $p$, 
displacement $u_1$, $u_2$ and the arrow plot of the computed displacement $u$ at the terminal time $T$  with mesh size $h= \frac{1}{16}$ and $\Delta t= h^2$, respectively, which coincide with the exact solution.

\begin{example}
\textrm{Mesh~deformation.}
\end{example}
This test considers a scenario where mesh nodes are always attached to material points and deform with them. The initial mesh is a triangular discretization of a two-dimensional rectangular domain; once generated, this mesh is fixed as the sole initial configuration and is independent of time. In terms of implementation, the program adopts the Total Lagrangian formulation: it consistently uses the initial mesh configuration as the reference and determines the deformed state by computing the displacement field relative to the initial positions at each time instant. For a target time $t$, the program directly superimposes the computed displacement vector onto the initial nodal coordinates, starting from the initial mesh, to obtain the deformed mesh at that instant-without requiring stepwise updates from the previous configuration. This differs from the Updated Lagrangian formulation, where calculations are performed on the newly deformed mesh at each step. This strategy simplifies the computational procedure and is particularly suitable for poroelastic problems under the linear strain assumption, since the equilibrium equations can be established on the fixed initial configuration, eliminating the need to reassemble the stiffness matrix based on the updated mesh after each time step. Although the results for each target time are obtained directly, outputting the deformed meshes at multiple time points still fully captures the deformation history of the structure. 

Let $\Omega= (0,1.5)\times(0,1.5)$, $\Gamma_{1}= \{ (0,y);~0\leq y\leq1.5 \}$, $\Gamma_{2}= \{(1.5,y);~0\leq y\leq1.5 \}$, $\Gamma_{3}= \{(x,1.5);~0\leq x\leq1.5 \}$, $\Gamma_{4} = \{(x,0);~0\leq x\leq1.5\}$. $T=0.5$, $t\in [0,T].$
The body force $\mathbf{f}$ and source term $\phi$ are as follows
\begin{align*}
	\mathbf{f}=&
	\begin{pmatrix}
		(\frac{3}{2}\mu-\lambda)\pi^2 t\sin(\pi x)sin(\pi y)-2\alpha\pi t\sin(2\pi x)\cos(2\pi y)-\lambda t(2x-1)(2y-1)-\mu t y(y-1)
		\\
		-(\frac{\mu}{2}+\lambda)\pi^2t\cos(\pi x)\cos(\pi y)-2\alpha\pi t\cos(2\pi x)\sin(2\pi y)-2(\mu+\lambda)tx(x-1)-\mu ty(y-1)
	\end{pmatrix},\\
	\phi = & (c_0+\frac{8K}{\mu_{f}}\pi^2 t)\cos(2\pi x)\cos(2\pi y)+\alpha\pi\cos(\pi x)\cos(\pi y)+\alpha x(x-1)(2y-1).
\end{align*}

The initial and boundary conditions are
\begin{align*}
	p = t\cos(2\pi x)\cos(2\pi y)  &\qquad\mbox{on }\partial\Omega_T,\\
	u_1 = t\sin(\pi x)\sin(\pi y) &\qquad\mbox{on }\Gamma_j\times [0,T],\, j=1,2,\\
	u_2 = txy(x-1)(y-1) &\qquad\mbox{on }\Gamma_j\times [0,T],\, j=,1,2,\\
	\sigma\bf{n}-\alpha p\bf{n} = \mathbf{f}_1, &\qquad \mbox{on } \p\Ome_T\backslash\Gamma_j,\\
	\mathbf{u}(x,y,0) = \mathbf{0},  \quad 
	p(x,y,0) =0 &\qquad\mbox{in } \Ome.
\end{align*}

It is easy to check that the exact solution are
\begin{align*}
	\mathbf{u}(x,y,t)=t
	\begin{pmatrix}
		t\sin(\pi x)\sin(\pi y)\\
		txy(x-1)(y-1)
	\end{pmatrix},\qquad
	p(x,y,t) = t\cos(2\pi x)\cos(2\pi y).
\end{align*}

The term $\mathbf{f}_1$ can be obtained from the exact solution.
The values of physical parameters are given in Table \ref{table_test3_1}. 
The other physical variables $\lambda$, $\mu$, $\kappa_1$, $\kappa_2$, $\kappa_3$  can be calculated by \eqref{3.1} and \eqref{3.5}.
\begin{table}[H]
	\begin{center}
		\caption{Values of physical parameters}\label{table_test3_1}
		\begin{tabular}{ccc}
			\hline
			Parameters 	& Description		& Values \\
			\hline
			$E$ 			& Young's modulus 							& 1e7\\
			$\nu$ 		& Poisson ratio 							& 0.4\\
			$\alpha$ 	& Biot-Willis constant 						& 0.5\\
			$c_0$  		& Constrained specific storage coefficient 	& 0.5\\
			$K$ 			& Permeability tensor 		& $\mathbf{I}$e-9\\
			$\mu_f$ 	& Viscosity coefficient						& 1\\
			\hline
		\end{tabular}
	\end{center}
\end{table}

\begin{table}[H]
	\begin{center}
		\caption{Spatial errors and convergence orders of $\mathbf{u} $ and $p$ with $\Delta t=T/5$ }\label{table_test3_2}
		\begin{tabular}{l c c c c c c c c}
			\hline
			$h$& $\|\mathbf{u}-\mathbf{u}_h\|_{L^2}$  &  order& $\|\mathbf{u}-\mathbf{u}_h\|_{H^1}$ & order
			& $\|p-p_h\|_{L^2}$ &order& $\|p-p_h\|_{H^1}$&order 
			\\ \hline
			$1/16$ & 1.9496e-4& & 1.7407e-2 & & 6.2628e-3& &  1.0141 & \\
			$1/32$ &2.0572e-5 &  3.24  & 3.9035e-3 &  2.15  & 1.3067e-3 & 2.26 &  4.9501e-1 &  1.03 \\
			$1/64$ & 2.4098e-6 &  3.09  & 9.3729e-4 &  2.05 &  3.0912e-4 &  2.08  & 2.4589e-1 &  1.01 \\
			$1/128$ & 2.9415e-7 &  3.03 &  2.3086e-4  & 2.02  & 7.6149e-5 &  2.02 &  1.2274e-1  & 1.00\\ 
			\hline
		\end{tabular}
	\end{center}
\end{table}

\begin{figure}[H]
	\centering
	\subfigure[$t=0$]{
		\includegraphics[width=0.44\textwidth]{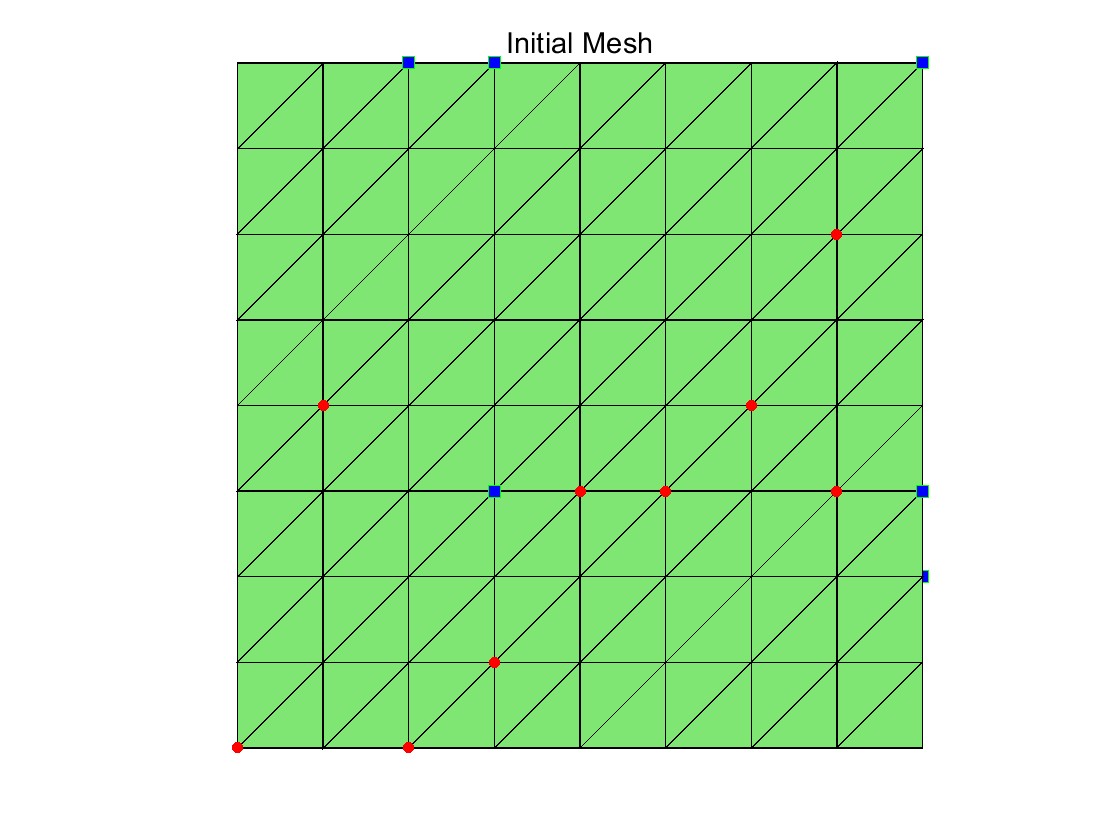}
	}
	\subfigure[$t=0.1$]{
		\includegraphics[width=0.44\textwidth]{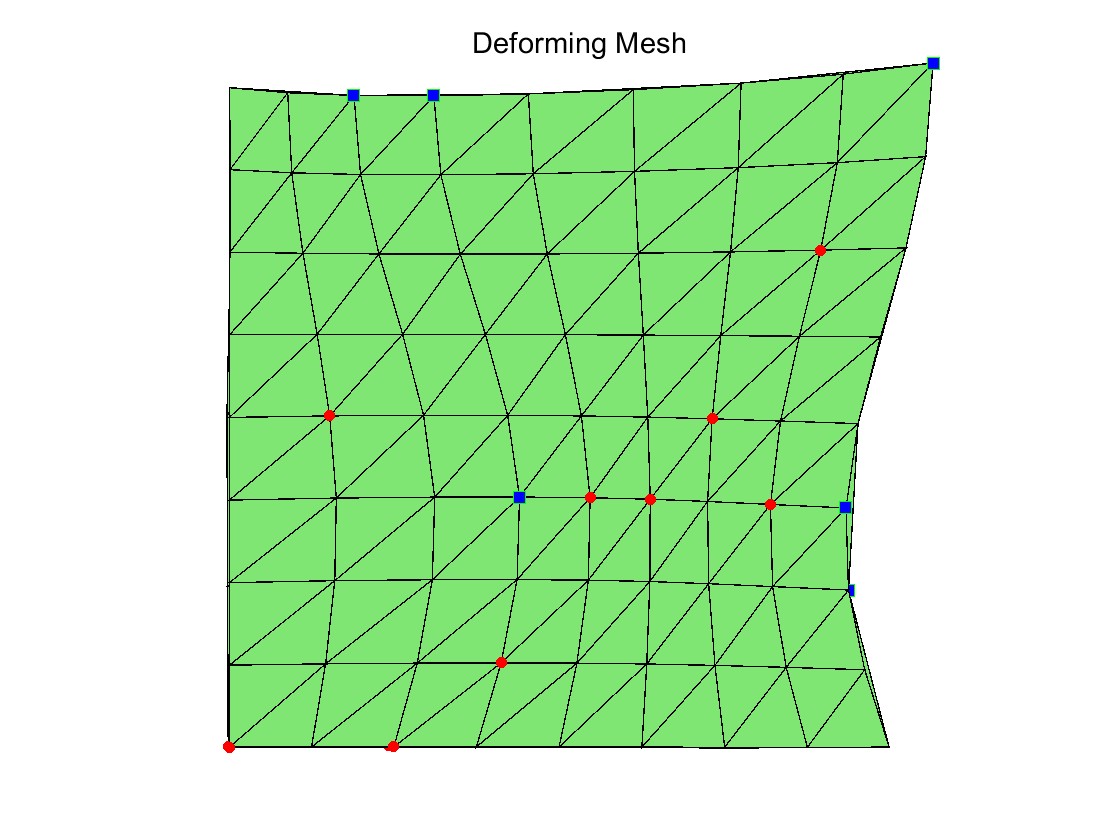}}
	\subfigure[$t=0.3$]{
		\includegraphics[width=0.45\textwidth]{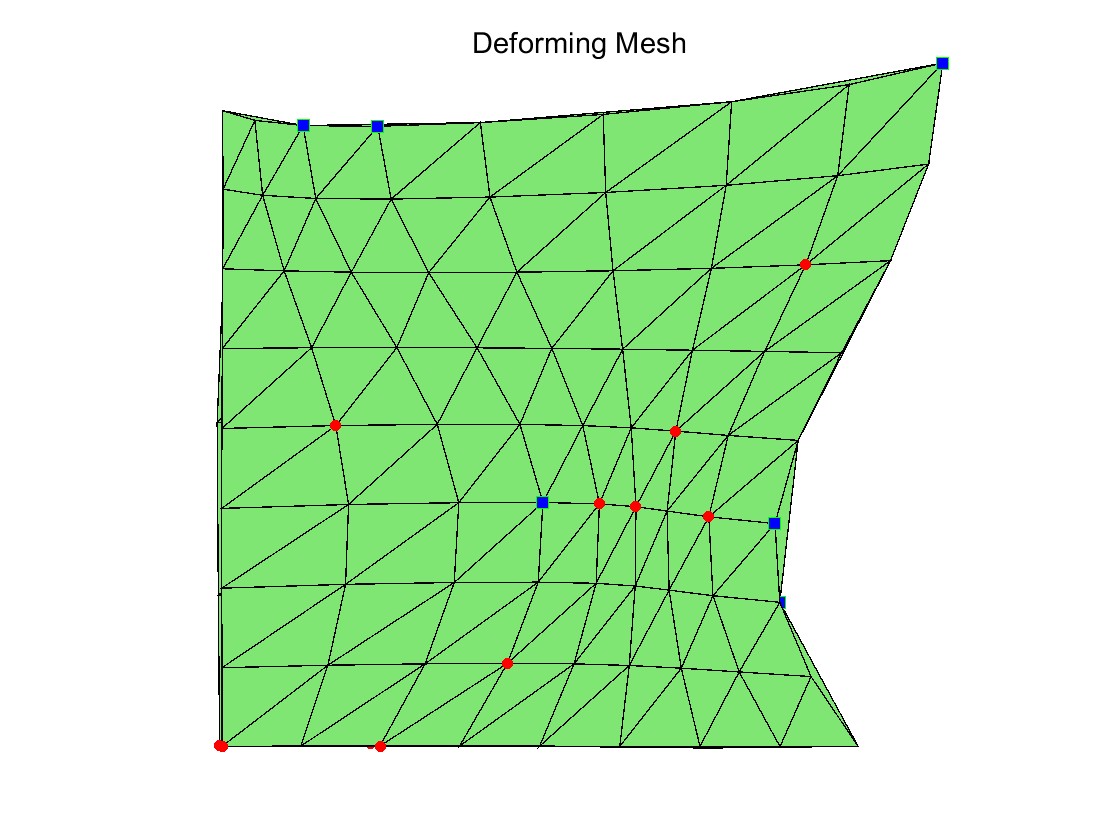}}
	\subfigure[$t=0.5$]{
		\includegraphics[width=0.45\textwidth]{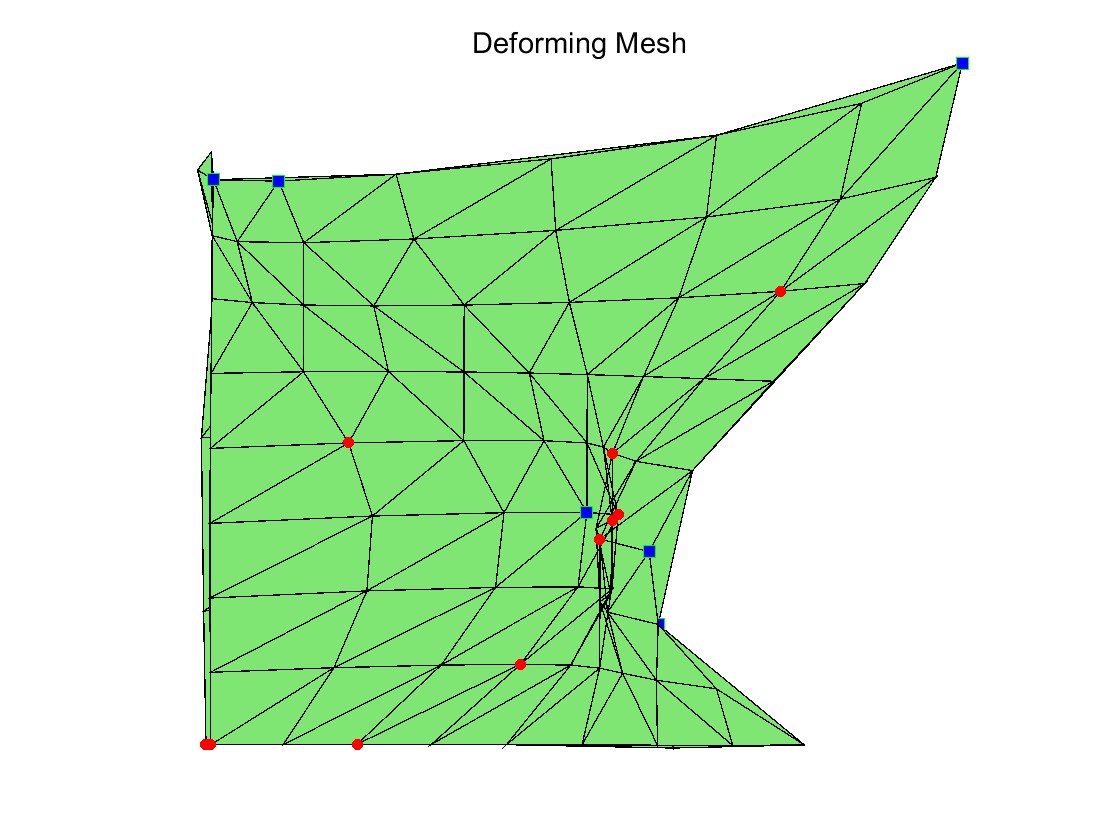}}
	\caption{The deformation of mesh over time $t$. Red dots mark nodes for deformation comparison, while blue dots highlight nodes with maximum displacement to emphasize the most deformed regions.}\label{figure_test3_1}
\end{figure}
\begin{figure}[H]
	\centering
	\subfigure[]{
		\includegraphics[width=0.45\textwidth]{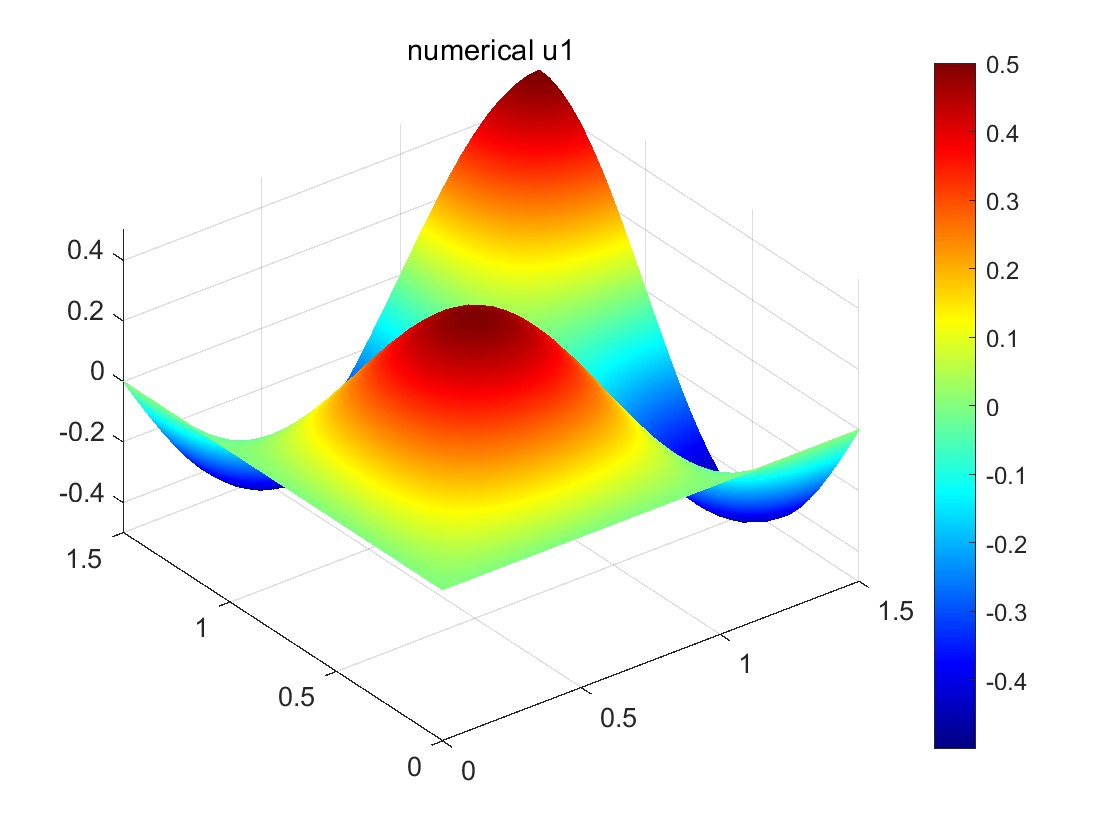}}
	\subfigure[]{
		\includegraphics[width=0.45\textwidth]{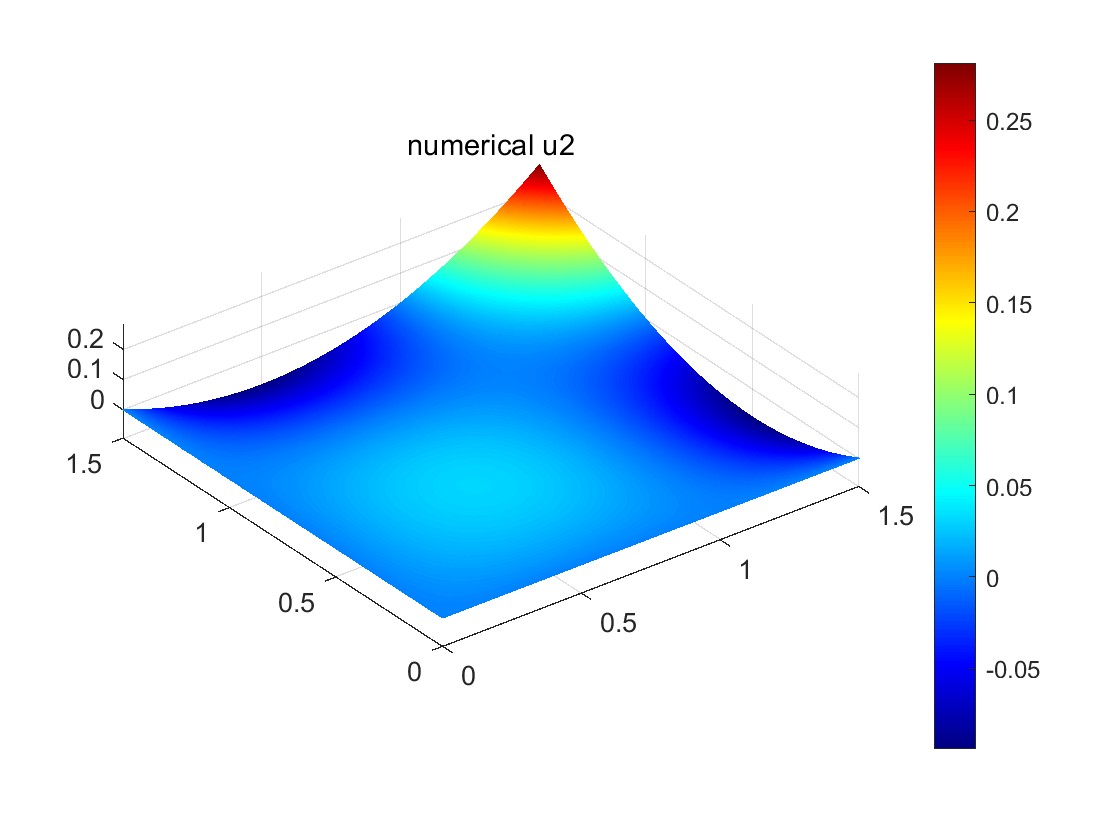}}
	\caption{Surface plots of the computed displacement (a) $u_1$   and (b) $u_2$  at the terminal time $T$.}\label{figure_test3_2}
\end{figure}

Table \ref{table_test3_2} displays the computed $L^2( \Omega) $-norm and $H^1( \Omega)$-norm errors of $\mathbf{u}$, $p$ and the convergence orders with respect to $h$ at the terminal time $T$.
Evidently, the convergence orders are consistent with Theorem \ref{thm3.5} and Theorem \ref{thm2.5}. 
Figure \ref{figure_test3_1} displays mesh deformation over time $t$. Red dots mark nodes to track position changes, while blue dots indicate nodes with the largest displacement, highlighting areas of maximum deformation. This visual aid helps compare local and global deformation patterns.
Figure \ref{figure_test3_2} displays surface plots of the computed displacements $u_1$ and $u_2$ at terminal time $T$ for mesh size $h= \frac{1}{16}$. The numerical solutions closely match the exact solution.
\begin{example}
Brain edema simulation.
\end{example}
In this numerical test, we use Algorithm \ref{alg2.1} with $\theta=1$ and ignore the influence of gravity and the body force. 
The geometric model is a 2D cross-section of a 3D model (see Figure \ref{f5.1}), one can refer to \cite{B13},  the length and width are $124$ mm and $104$ mm, respectively, $\Gamma_{2} $ is the ventricular wall with cerebrospinal fluid (CSF) inside, $\Gamma_{1} $ is the brain tissue wall whose outer part is the subarachnoid space (SAS) part.
\begin{figure}[H]
	\centering{
		\includegraphics[width=6cm]{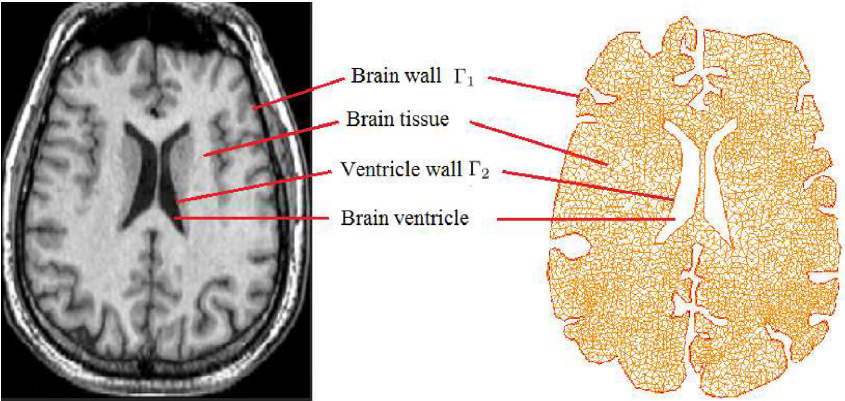}}
	\caption{Magnetic resonance imaging slice of human brain (left) and the finite element mesh (right)}\label{f5.1}
\end{figure}

The suitable boundary conditions are described below.
$\Gamma_{1}$is the brain tissue wall which is closed to the skull, so the displacement along $\Gamma_{1}$ is zero, i.e.,
\begin{equation}
	\mathbf{u}=\mathbf{0} ~~\mathrm{on}~~\Gamma_{1}.
\end{equation}

When CSF flowing out of the brain tissue, it is absorbed by the SAS part. The CSF absorption is linearly dependent on the difference value of the pressure on the brain tissue wall and the pressure of SAS ($p_{SAS}$). The balance of flow rate leads to
\begin{equation}
	(K \nabla p) \cdot \boldsymbol{n}=c_{b}\left(p_{S A S}-p\right) \quad ~~\mathrm{on}~~\Gamma_{1},
\end{equation}
where $c_{b}$ is the value of conductance. According to \cite{B36,B37,B38}, the ventricular CSF flows out of the ventricle from the aqueduct satisfies Darcy's law. From the data provided in \cite{B38}, a normal brain will produce (discharge) $0.38$ ml/min CSF, and the rate of CSF outflowing from the aqueduct is approximately $0.31$ ml/min. This means that the rate of CSF outflows through brain parenchyma is $Q_{0}=0.07$ ml/min. The conductance $c_{b}$ can be calculated by
\begin{equation}
	c_{b}=\frac{Q_{0}}{p_{d} A_{S A S}},
\end{equation}
here, $p_d= 30$ Pa is the difference between the ventricular pressure ($\thickapprox 1100$ Pa) and $p_{SAS}(\thickapprox 1070 \mathrm{Pa})$ for a normal person; 
$A_{S A S}$ is the surface area of the SAS, approximately equals $76000 ~\mathrm{mm^{2}}$, which is $\frac{1}{3}$ of the area of the cerebral cortex(see \cite{B39}). Therefore, we have $c_{b}=3 \times 10^{-5}~\mathrm{mm/min/Pa}$.

On the ventricle wall $\Gamma_{2}$, the total normal force from the tissue part needs to be balanced with the fluid pressure from the ventricle, that is:
\begin{equation}
	(\sigma-\alpha p) \cdot \boldsymbol{n}=-p \cdot \boldsymbol{n} \quad ~~\mathrm{on}~~\Gamma_{2}.
\end{equation}

The pressure at the ventricle wall is around $p=1100 ~\mathrm{Pa} ~~\mathrm{on}~~\Gamma_{2}$, one can see \cite{B31}.

\begin{table}[H]
	\begin{center}
		\caption{Parameter value}\label{t5.1}
		\begin{tabular}{cccc}
			\hline Parameters & Values & Parameters & Values \\
			\hline$c_{0}$ & $4.5 \times 10^{-7} \mathrm{~Pa}^{-1}$ & $K$ & $1.4 \times 10^{-9} \mathrm{~mm}^{2}$ \\
			$c_{b}$ & $3 \times 10^{-5} \mathrm{~mm} / \mathrm{min} / \mathrm{Pa}$ & $\alpha$ & 1 \\
			$p_{S A S}$ & $1070 \mathrm{~Pa}$ & $\mu$ & $0.35$\\
			$\mu_{f}$ & $1.48 \times 10^{-5} \mathrm{~Pa} \cdot \mathrm{min}$ & $E$ & $9010 \mathrm{~Pa}$ \\
			\hline
		\end{tabular}
	\end{center}
\end{table}

When the brain is normal, the absorption and drainage of cerebrospinal fluid are in balance, that is, $\phi=0$. 
Ref. \cite{B13} presents that the pressure distribution of a normal brain lies between $1070$ Pa– $1100$ Pa. 
In this paper, using the parameter values in Table \ref{t5.1} and Algorithm \ref{alg2.1}, we obtain that the pressure distribution is between $1067$ Pa and $1108$ Pa, which is similar to ones of \cite{B13}.

Following traumatic brain injury (TBI), the physiological equilibrium between cerebrospinal fluid absorption and drainage becomes readily disrupted. Pathological absorption at the lesion site induces cerebral tissue deformation with subsequent elevation of local intracranial pressure. Concurrently, the rigid cranial confinement creates mechanical compression of ventricular structures by displaced brain matter. 
According to the experimental data in \cite{B31,B13},
we take $\phi = 9\times10^{- 3}/\mathrm{min} $.
We take the model as the baseline model which with the physical parameters in Table \ref {t5.1} and the above $\phi$. 
\begin{figure}[H]
	\centering
	\subfigure[Pressure distribution ]{
		\includegraphics[width=0.45\textwidth]{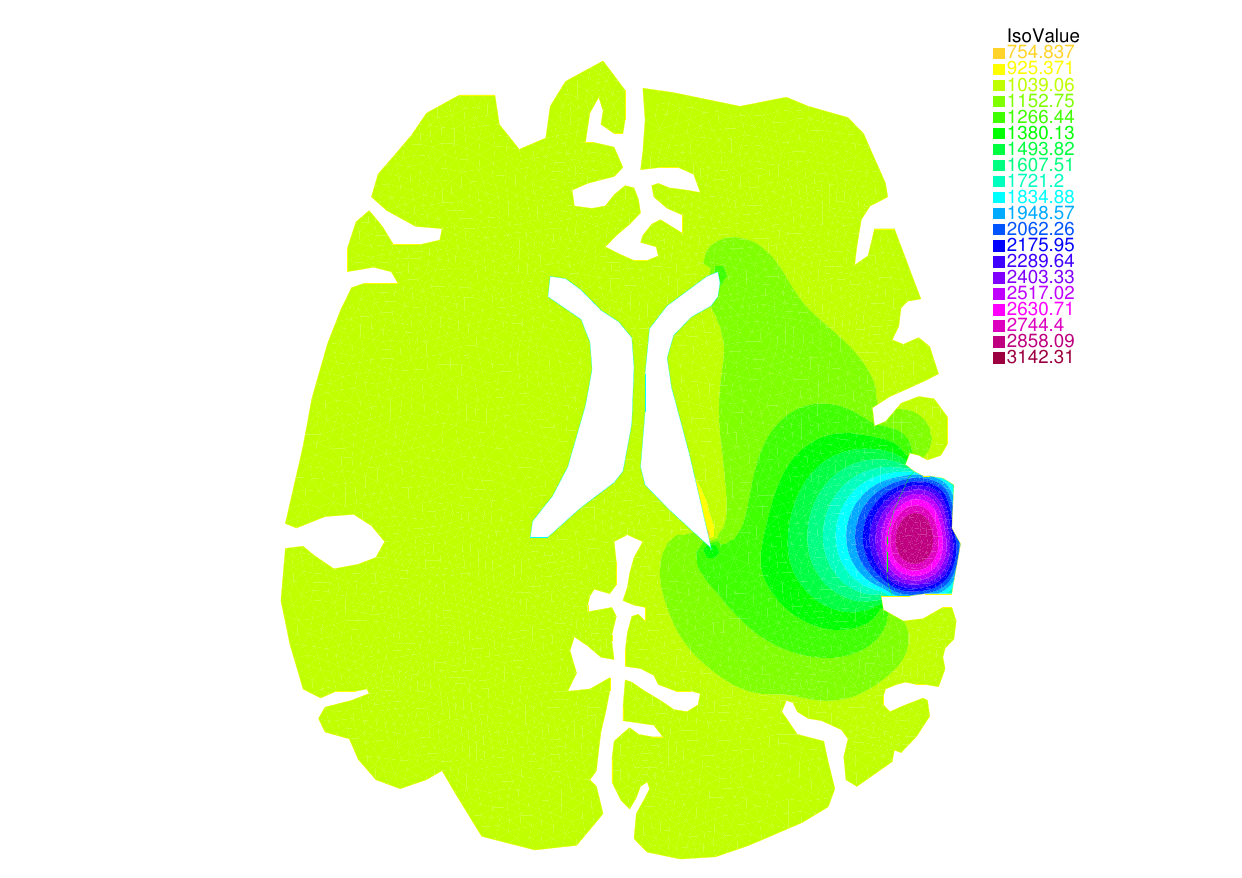}}
	\subfigure[Displacement distribution ]{
		\includegraphics[width=0.45\textwidth]{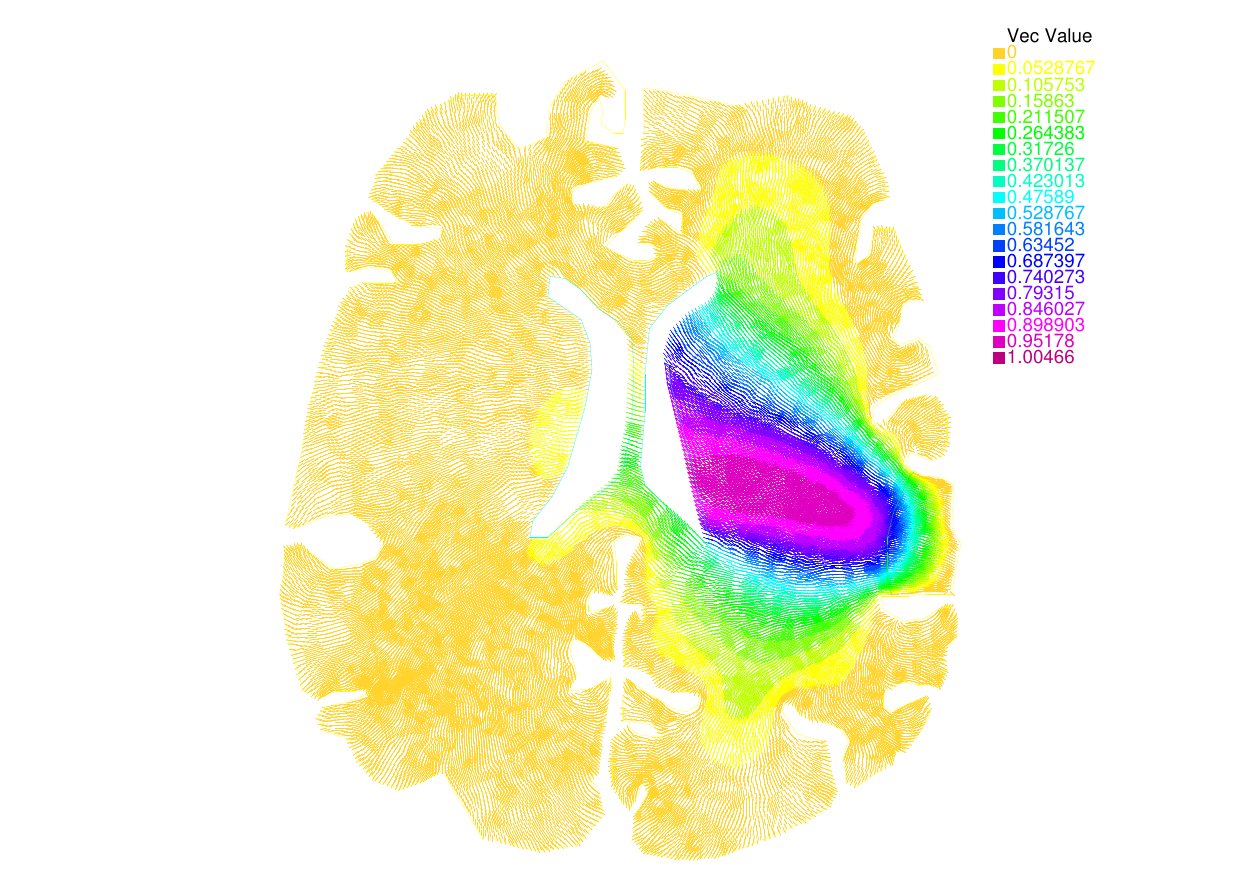}}
	\caption{
		Distribution of pressure and displacement after traumatic brain injury.}\label{f5.4}
\end{figure}
\begin{figure}[H]
	\centering{
		\includegraphics[width=8cm]{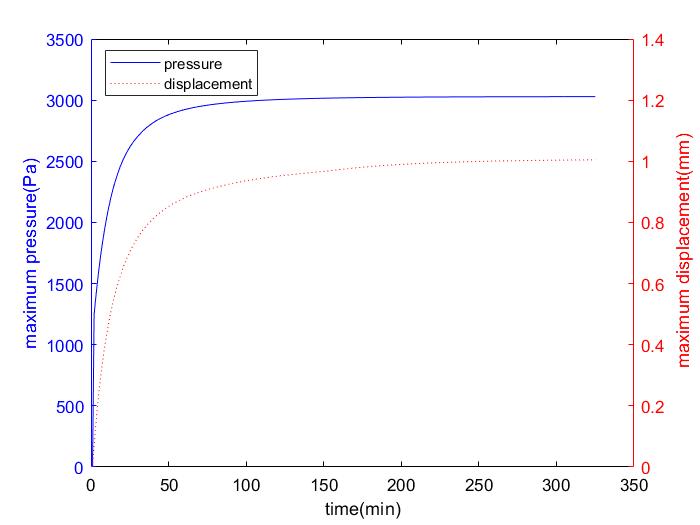}}
	\caption{Distribution of maximum displacement and pressure with time.}\label{f5.5}
\end{figure}
Figure \ref{f5.4} shows the distribution of intracranial pressure and brain tissue displacement of the injured brain. The maximum pressure $p_ {max} $ in injured area is $3142.31$ Pa. Influenced by the total stress, the brain tissue in the edema area deforms and compresses the surrounding brain tissue. Because the skull is fixed and the ventricle is free, brain tissue deformation moves toward the ventricle. In our simulation, the maximum tissue deformation $\mathbf{u}_{max} $ is $1.0046$ mm. 
This is in line with the results of \cite{B13}.

Figure \ref{f5.5} displays the maximum values of pressure and tissue deformation as functions of time, which show that the maximum intracranial pressure and tissue deformation increased rapidly in the first hour. Then, the increasing speed will slow down. Intracranial pressure and tissue deformation peaked at about $5.3$ hours.

Next, we investigate the influence of key physical parameters $E, \nu ~\mathrm{and}~ K$,  which is the main different with \cite{2022gehe}. To do that, we take the other parameters same as those in Table \ref{t5.1}. Set $	E_ {0}=9010\mathrm{~Pa}, \nu_ {0}=0.35,  K_ {0}=1.4\times10^{-9} \mathrm{~mm^{2}}$,  
 $p_ {max} = 3142.31$ Pa and $\mathbf{u}_{max} = 1.0046$ mm as the baseline values.
\begin{table}[h]
	\begin{center}
		\caption{The maximum values of $\mathbf{u}$ and $p$ $(\mathbf{u}_{max} \mathrm{~and~} p_{max})$ under different values of $E$ by fixing $\nu=\nu_{0}$ and $K=K_{0}$.}\label{t5.2}
		\begin{tabular}{ccccc}
			\hline  & $\mu$ & $1/\lambda$ & $\mathbf{u}_{max}$ & $p_{max}$ \\
			\hline  $E=0.2E_{0}$& $667\mathrm{~Pa}$ & $6.42\times 10^{-4} \mathrm{~Pa}^{-1}$ & $4.975 \mathrm{~mm}$ & $3025.97\mathrm{~Pa}$ \\
			\hline $E=2E_{0}$ & $6674\mathrm{~Pa}$ & $6.42\times 10^{-5} \mathrm{~Pa}^{-1}$ & $0.504 \mathrm{~mm}$ & $3029.36\mathrm{~Pa}$ \\
			\hline
		\end{tabular}
	\end{center}
\end{table}

\begin{figure}[H]
	\centering
	\subfigure[$E=0.2E_{0}$]{
		\includegraphics[width=0.45\textwidth]{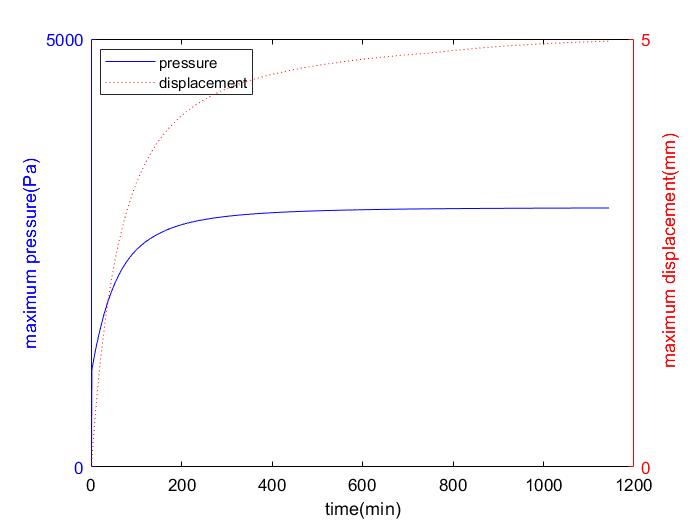}}
	\subfigure[$E=2E_{0}$]{
		\includegraphics[width=0.45\textwidth]{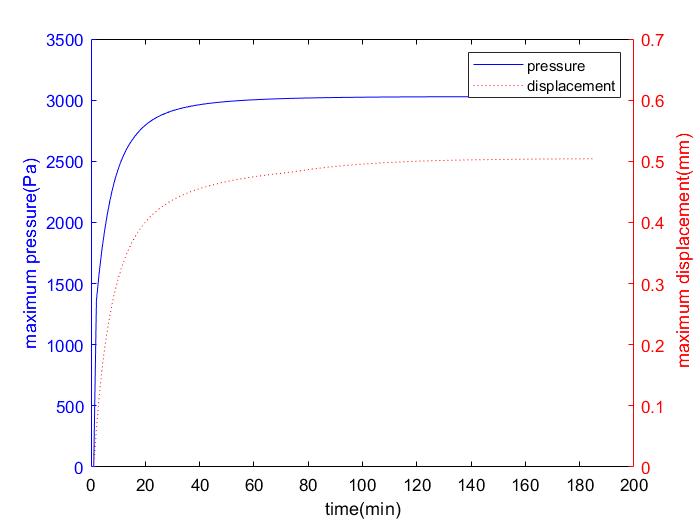}}
	\caption{The maximum values of displacement and pressure as time evolves.}\label{f5.6}
\end{figure}

Table \ref {t5.2} and Figure \ref{f5.6} display the effects of Young's modulus $E$ on the values of $\mathbf{u}_{max} $ and  $p_{max}$.  From Table \ref{t5.2}, one can observe that $\mathbf{u}_{max} $ is $4.975$ mm and $0.504$ mm which are $4.97$ and $0.5$ times of the baseline value when $E$ is $0.2E_ {0} $ and $2E_{0}$, respectively, and the change of $E$ has small effects on the pressure value. Figure \ref {f5.6} illustrates that when $E$ ranges from $0.2E_ {0} $ to $2E_{0} $, the total development time are $1144$ and $184$ minutes, respectively, which are $3.54$ and $0.57$ times of that for the baseline model. This means that it has a big influence on the edema speed. Larger $E$ will make brain edema develop much faster than expected.
\begin{table}[H]
	\begin{center}
		\caption{The maximum values of $\mathbf{u}$ and $p$ $(\mathbf{u}_{max} \mathrm{~and~} p_{max})$ under different values of $\nu$ by fixing $E=E_{0}$ and $K=K_{0}$.}\label{t5.3}
		\begin{tabular}{ccccc}
			\hline  & $\mu$ & $1/\lambda$ & $\mathbf{u}_{max}$ & $p_{max}$ \\
			\hline  $\nu=0.3$& $3465\mathrm{~Pa}$ & $1.9\times 10^{-4} \mathrm{~Pa}^{-1}$ & $1.165 \mathrm{~mm}$ & $3028.71\mathrm{~Pa}$ \\
			\hline $\nu=0.496$ & $3011\mathrm{~Pa}$ & $2.68\times 10^{-6} \mathrm{~Pa}^{-1}$ & $0.054 \mathrm{~mm}$ & $3039.05\mathrm{~Pa}$ \\
			\hline
		\end{tabular}
	\end{center}
\end{table}
\begin{figure}[H]
	\centering
	\subfigure[$\nu=0.3$]{
		\includegraphics[width=0.45\textwidth]{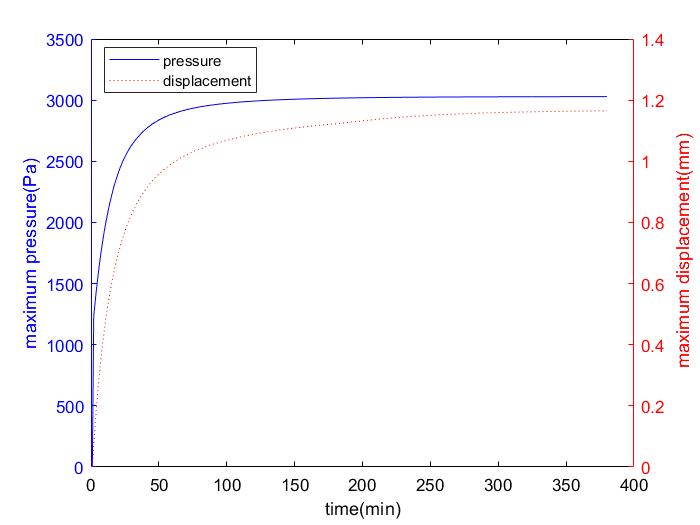}}
	\subfigure[$\nu=0.496$]{
		\includegraphics[width=0.45\textwidth]{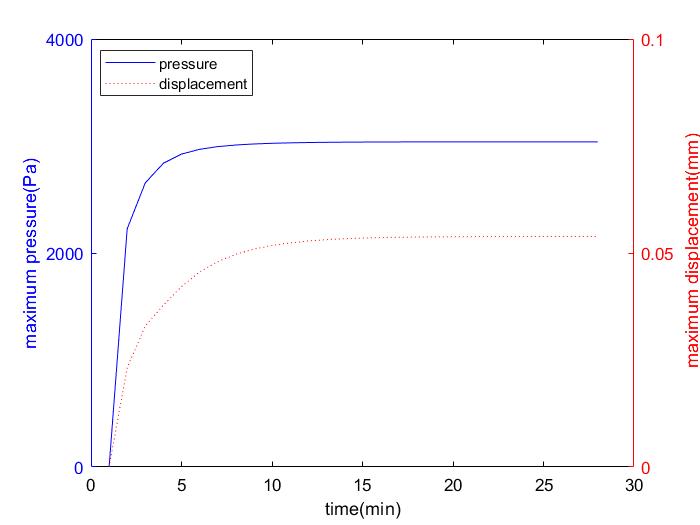}}
	\caption{The maximum values of displacement and pressure as time evolves.}\label{f5.7}
\end{figure}
Table \ref{t5.3} and Figure \ref{f5.7} display the effects of the Poisson ratio $\nu $ on the values of $\mathbf{u}_{max} $ and $p_{max} $.  Table \ref{t5.3} show that the corresponding $\mathbf {u}_{max} $ values are $1.16$ and $0.05$ times of that in baseline model when $\nu $ ranges from $0.3$ to $0.496$. However, one can observe a very small $\mathbf{u}_{max} $ when $\nu $ is close to $0.5$, which means that brain tissue is nearly incompressible.
\begin{table}[H]
	\begin{center}
		\caption{The maximum values of $\mathbf{u}$ and $p$ $(\mathbf{u}_{max} \mathrm{~and~} p_{max})$ under different values of $K$ by fixing $E=E_{0}$ and $\nu=\nu_{0}$.}\label{t5.4}
		\begin{tabular}{ccccc}
			\hline  & $\mu$ & $1/\lambda$ & $\mathbf{u}_{max}$ & $p_{max}$ \\
			\hline  $K=0.1K_{0}$& $3337\mathrm{~Pa}$ & $1.28\times 10^{-4} \mathrm{~Pa}^{-1}$ & $5.299 \mathrm{~mm}$ & $13815.8\mathrm{~Pa}$ \\
			\hline $K=10K_{0}$ & $3337\mathrm{~Pa}$ & $1.28\times 10^{-4} \mathrm{~Pa}^{-1}$ & $0.355 \mathrm{~mm}$ & $1625.35\mathrm{~Pa}$ \\
			\hline
		\end{tabular}
	\end{center}
\end{table}

\begin{figure}[H]
	\centering
	\subfigure[$K=0.1K_{0}$]{
		\includegraphics[width=0.45\textwidth]{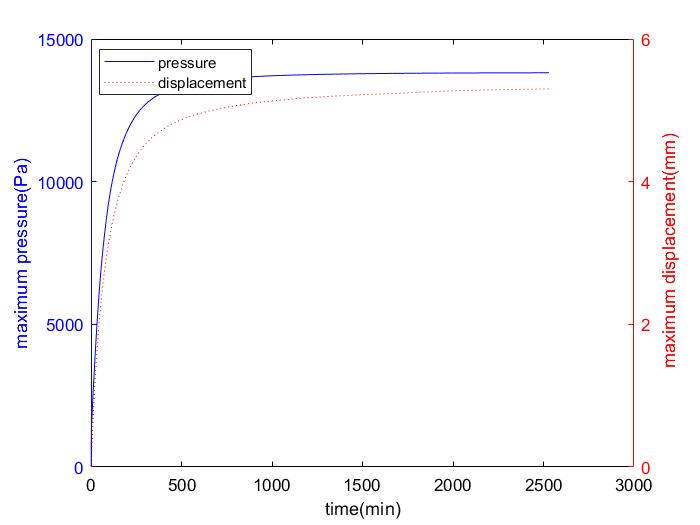}}
	\subfigure[$K=10K_{0}$]{
		\includegraphics[width=0.45\textwidth]{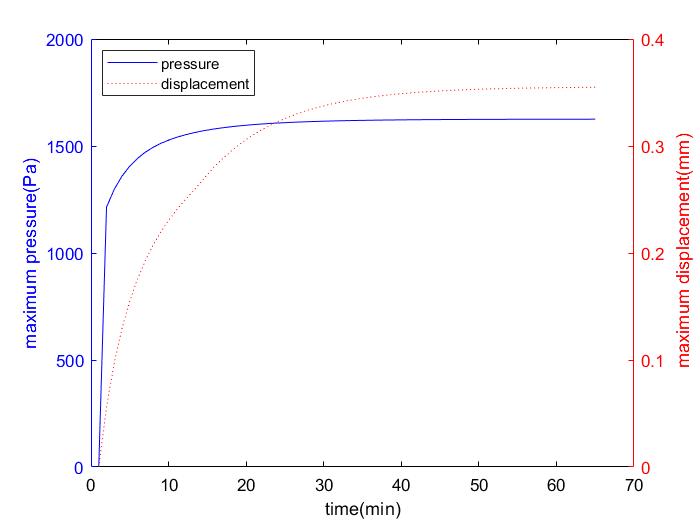}}
	\caption{The maximum values of displacement and pressure as time evolves.}\label{f5.8}
\end{figure}
Table \ref{t5.4} and Figure \ref{f5.8} display the effects of permeability $K$ on the values of $\mathbf{u}_{max} $ and $p_{max}$.
Table \ref{t5.4} shows that $\mathbf {u}_{max} $ is $530\%$ (or $35\%$) of that in the baseline value while $p_{max}$ is $456\%$ (or $54\%$) times of the baseline value when the testing permeability $K$ is $0.1K_{0}$ (or $10K_{0} $).  Meanwhile, the 
development time of  the two cases are $42$ hours and $1$ hour which are $7.83$ and $0.21$ times that of the baseline model, respectively. Unlike Poisson ratio $\nu $ and Young's modulus $E$, which only affect the tissue deformation, the permeability $K$ has a big influence on $\mathbf{u}_{max} $ and $p_{max}$. The lower permeability will result in higher pressure and larger  deformation, and therefore will make brain edema more severe.  However, the higher permeability will make brain edema develop much faster. In a word, we observe that Young's modulus $E$ and Poisson ratio $\nu$ have big influence on the value of $\mathbf{u}_{max}$ but small influence on the value of $p_{max}$, while the permeability $K$ has big influences on both of them. Moreover, we conclude that larger values of the parameters $E$, $\nu $ and $K$ will result in a smaller total developing time.

\section{Conclusion}\label{sec-4}
In this paper, we derive an optimal  $L^2$-norm error estimate of the  multiphysics finite element method for the poroelasticity model  by introducing an auxiliary problem. Then, we show some numerical tests to verify the theoretical result and apply the reformulated model to simulate brain edema and investigate the effects  of the key parameters and compare the results with the existing work, which will be helpful to model and cure the brain edema.



\begin{thebibliography}{99}

\bibitem{B1}
R. Lewis, B. Schrefler. The Finite Element Method in the Static and Dynamic Deformation and Consolidation in Porous Media. John Wiley and Sons, 1998.


\bibitem{B2}
A. Settari, D. Walters. Advances in coupled geomechanical and reservoir modeling with applications to reservoir compaction. SPE Journal, 2001, 6(3):334–342.


\bibitem{B3}
A. Settari, D. Walters, G. Behie. Use of coupled reservoir and geomechanical modeling for integrated reservoir analysis and management. Journal of Canadian Petroleum Technology, 2001, 40(12): PETSOC-01-12-04.


\bibitem{B4}
M. Biot. General theory of three-dimensional consolidation. Journal of Applied Physics, 1941, 12: 155-164.


\bibitem{B5}
G. Chen. Consolidation of multilayered half space with anisotropic permeability and compressible constituents. International Journal of Solids and Structures, 2004, 41: 4567-4586.


\bibitem{B6}
Y. Cheung, L. Tham. Numerical solutions for Biot's consolidation of layered soil. Journal of Engineering Mechanics, 1983, 109(3): 669-679.


\bibitem{B7}
J. Hudson, O. Stephansson, J. Andersson, C. Tsang, L. Jing. Coupled T-H-M issues relating to radioactive waste repository design and performance. International Journal of Rock
Mechanics and Mining Sciences, 2001, 38(1): 143-161.


\bibitem{B8}
M. Ferronato, N. Castelletto, G. Gambolati. A fully coupled 3-D mixed finite element model of Biot consolidation. Journal of Computational Physics, 2010, 229(12): 4813-4830.



\bibitem{B9}
B. Simon, M. Kaufmann, M. McAfee, A. Baldwin. Finite element models for arterial wall mechanics. Journal of Biomechanical Engineering, 1993, 115: 489-496.


\bibitem{B10}
A. Mak, L. Huang, Q. Wang. A biphasic poroelastic analysis of the flow dependent subcutaneous tissue pressure and compaction due to epidermal loadings: issues in pressure sore. Journal of Biomechanical Engineering, 1994, 116(4): 421-429.

\bibitem{B11}
M. Yang, L. Taber. The possible role of poroelasticity in the apparent viscoelastic behavior of passive cardiac muscle. Journal of Biomechanics, 1991, 24(7): 587-597.


\bibitem{B12}
V. Mow, M. Kwan, W. Lai, M. Holmes, A finite deformation theory for nonlinearly permeable soft hydrated biological tissues. Springer-Verlag New York Inc, 1986, pp 153–179.


\bibitem{B13}
G. Ju, M. Cai, J. Li, J. Tian. Parameter-robust multiphysics algorithms for Biot model with application in brain edema simulation. Mathematics and Computers in Simulation, 2020, 177: 385-403.


\bibitem{B14}
P. Phillips, M. Wheeler. A coupling of mixed and continuous Galerkin finite element methods for poroelasticity I: the continuous in time case. Computational Geosciences, 2007, 11(2): 131-144.


\bibitem{B15}
P. Phillips, M. Wheeler. A coupling of mixed and continuous Galerkin finite element methods for poroelasticity II: the discrete in time case. Computational Geosciences, 2007, 11(2): 145-158.


\bibitem{B41}
P. Phillips, M. Wheeler. A coupling of mixed and discontinuous Galerkin finite-element methods for poroelasticity. Computational Geosciences, 2008, 12(4): 417-435.


\bibitem{Phillips2009}
P. Phillips, M. Wheeler. Overcoming the problem of locking in linear elasticity and poroelasticity: an heuristic approach. Computational Geosciences, 2009, 13(1): 5–12. 


\bibitem{B42}
S. Yi. A coupling of nonconforming and mixed finite element methods for Biot's consolidation model. Numerical Methods for Partial Differential Equations, 2013, 29(5): 1749-1777.


\bibitem{B27} X. Feng, Z. Ge, Y. Li. Analysis of a multiphysics finite element method for a poroelasticity model. IMA Journal of Numerical Analysis, 2018, 38(1): 330-359.  arXiv:1411.7464, [math.NA], 2014.


\bibitem{B43}
M. Sun, H. Rui. A coupling of weak Galerkin and mixed finite element methods for poroelasticity. Computers And Mathematics with Applications, 2017, 73(5): 804-823.


\bibitem{B32}
S. Budday, R. Nay, R. Rooij, P. Steinmann, T. Wyrobek, T. Ovaert, E. Kuhl, Mechanical properties of gray and white matter brain tissue by indentation. Journal of the Mechanical Behavior of Biomedical Materials, 2015, 46: 318-330.


\bibitem{B34}
S. Hakim, J. Venegas, J. Burton. The physics of the cranial cavity, hydrocephalus and normal pressure hydrocephalus: mechanical interpretation and mathematical model. Surgical Neurology International, 1976, 5(3): 187-210.



\bibitem{temam}  
R. Temam. Navier-Stokes Equations.
Studies in Mathematics and its Applications, Vol. 2, North-Holland, 1977.

\bibitem{B28}
S. Brenner, L. Scoot. The Mathematical Theory of Finite Element Methods, Third Edition. Springer-Verlag New York Inc, 2008.



\bibitem{B22}
S. Brenner. A nonconforming mixed multigrid method for the pure displacement problem in planar linear elasticity. SIAM Journal on Numerical Analysis, 1993, 30(1): 116-135.


\bibitem{B23}
V. Girault, P. Raviart. Finite Element Methods for Navier-Stokes Equations: Theory and Algorithms. Springer-Verlag Berlin Heidelberg, 1986.


\bibitem{B24}
M. Bebendorf. A note on the Poincar$\acute{e}$ inequality for convex domains. Journal for Analysis and its Applications, 2003, 22(4): 751-756.


\bibitem{B35}
F. Brezzi, M. Fortin. Mixed and Hybrid Finite Element Methods. Springer-Verlag New York Inc, 1991.


\bibitem{B26}
X. Feng, Y. He. Fully discrete finite element approximations of a polymer gel model. SIAM Journal on Numerical Analysis, 2010, 48(6): 2186-2217.



\bibitem{B38}
J. Vardakis, D. Chou, B. Tully, C. Hung, T. Lee, P. Tsui, Y. Ventikos. Investigating cerebral oedema using poroelasticity. Medical Engineering and Physics, 2016, 38(1): 48-57.



\bibitem{B36}
D. Levine. The pathogenesis of normal pressure hydrocephalus: a theoretical analysis. Bulletin of mathematical biology, 1999, 61(5): 875-916.


\bibitem{B37}
A. Smillie, I. Sobey, Z. Molnar. A hydroelastic model of hydrocephalus. Journal of Fluid Mechanics, 2005, 539: 417-443.


\bibitem{B39}
R. Toro, M. Perron, B. Pike, L. Richer, S. Veillette, Z. Pausova, T. Paus. Brain size and folding of the human cerebral cortex. Cerebral Cortex. 2008, 18(10): 2352-2357.


\bibitem{B31}
X. Li, H. Holst, S. Kleiven. Influence of gravity for optimal head positions in the treatment of head injury patients. Acta Neurochirurgica, 2011, 153(10): 2057-2064.




\bibitem{2022gehe}
W. He, Z. Ge. A new mixed finite element method for a swelling clay model with secondary consolidation. Applied Mathematical Modelling, 2020, 112: 391-414.



\end{thebibliography}
\end{document}